\documentclass[11pt]{amsart}

\usepackage[parfill]{parskip}    
\usepackage{graphicx}
\usepackage{amssymb, amsfonts,mathabx}
\usepackage{epstopdf}
\DeclareGraphicsRule{.tif}{png}{.png}{`convert #1 `dirname #1`/`basename #1 .tif`.png}

\usepackage{amsmath, amsthm}
\usepackage[all,cmtip]{xy}

\usepackage{tikz}
\usetikzlibrary{matrix,arrows,patterns,calc,through,backgrounds,fadings, decorations}

\usetikzlibrary{decorations.pathreplacing}

    \setcounter{topnumber}{2}
    \setcounter{bottomnumber}{2}
    \setcounter{totalnumber}{4}     
    \setcounter{dbltopnumber}{2}

\newtheorem{theorem}{Theorem}[section]
\newtheorem{lemma}[theorem]{Lemma}
\newtheorem*{lemma*}{Lemma}
\newtheorem{proposition}[theorem]{Proposition}
\newtheorem{corollary}[theorem]{Corollary}
\newtheorem{definition}[theorem]{Definition\rm}

\newtheorem{remark}{\it Remark\/}

\newcommand{\gitat}[1]{/\!\! /_{\hspace{-0.125em} #1}\ }
\newcommand{\biggit}{\Big/\!\!\! \!\Big/}
\newcommand{\biggitat}[1]{\Big/\!\! \!\!\Big/_{\hspace{-0.5em} #1}\ }

\newcommand{\SGS}{S_{G,{\beta} }}
\newcommand{\Aff}{\mathbb{A}}
\newcommand{\SGv}{S_G}

\newcommand{\Adj}{\text{Ad}}
\newcommand{\Tra}{\text{Tr}}
\newcommand{\End}{\text{End}}
\newcommand{\Int}{\text{int}}
\newcommand{\actson}{\reflectbox{$\righttoleftarrow$}}

\newcommand{\rea}{\Re \operatorname{e}}
\newcommand{\ima}{\Im m}

\usepackage{hyperref}

\title{On Non-Abelian Symplectic Cutting}
\author{Johan Martens}
\address[Johan Martens]{Center for Quantum Geometry of Moduli Spaces (QGM), Aarhus University, Ny Munkegade 118, bld. 1530, \r{A}rhus 8000, Denmark}
\email{jmartens@qgm.au.dk}
\author{Michael Thaddeus}
\address[Michael Thaddeus]{Department of Mathematics, Columbia University, 2990 Broadway, New York NY 10027, USA}
\email{thaddeus@math.columbia.edu}
\date{\today}  
\thanks{JM was supported by QGM (Centre for Quantum Geometry of Moduli Spaces) funded by the Danish National Research Foundation.
MT was partially supported by NSF grants DMS--0401128 and DMS--0700419.}                                         
\begin{document}

\begin{abstract}
We discuss symplectic cutting for Hamiltonian actions of non-Abelian compact groups.  By using a degeneration based on the Vinberg monoid we give, in good cases, a global quotient description of a surgery construction introduced by Woodward and Meinrenken, and show it can be interpreted in algebro-geometric terms.  A key ingredient is the `universal cut' of the cotangent bundle of the group itself, which is identified with a moduli space of framed bundles on chains of projective lines recently introduced by the authors.
\end{abstract}

\maketitle

\section{Introduction}
Since its introduction by Lerman in \cite{lerman}, 
symplectic cutting has proven to be an elementary yet remarkably useful technique in symplectic geometry with diverse applications, e.g.\ \cite{hausel,convbycuts,liruan,me}.  Symplectic cutting starts from a symplectic manifold (or orbifold) $M$ with a Hamiltonian action of a torus $T$, and a (rational) polyhedral set $P$ in $\mathfrak{t}^*$; it returns a new Hamiltonian $T$-space $M_P$ such that its image under the moment map is $\mu_T(M_{P})=\mu_T(M)\cap P$. Moreover the pre-images  $\mu_T^{-1}(\Int(P))$ in $M$ and $M_P$ are $T$-equivariantly symplectomorphic.  

The basic construction of the symplectic cut is as a global quotient  (in \cite{lerman} only actions of a single $U(1)$ were discussed; the natural generalization to cutting with arbitrary tori and polyhedral sets was given in \cite{convbycuts}).  One takes the Cartesian product $M\times \mathbb{C}^n$, where $n$ is the number of facets of $P$, and then applies a symplectic reduction by  the diagonal $U(1)^n$ action: 
\begin{equation}\label{globalquotient}M_P:= \Big( M\times \mathbb{C}^n\Big)\biggit U(1)^n.\end{equation}  
From this definition it is clear what structures $M_P$ obtains from $M$: if the reduction is generic $M_P$ will again be a symplectic orbifold with a Hamiltonian $T$-action, but it can inherit more.
 If $M$ is K\"ahler and the $T$-action extends to one of $T_{\mathbb{C}}$ then the cut space will be K\"ahler as well (though the symplectomorphism on $\mu_T^{-1}$ will not be a K\"ahler isomorphism, see also \cite{kahlercuts}); in fact if $M$ is (semi)projective the whole procedure can be understood as a geometric invariant theory quotient in algebraic geometry \cite{algcuts}.

On a topological level, one can understand this construction also more locally, 
motivating the alternative name \emph{equivariant symplectic surgery}.  One takes the pre-image of $P$ under the moment map $\mu_T$, and on the pre-images of the facets of $P$ collapses the circle subgroups of $T$ determined by the normal vectors to the facets: \begin{equation}\label{surgery}M_P\ \cong \ \mu^{-1}(P)/\sim\  =\  \bigcup_{P_I\subset P} \mu^{-1}_T(\Int(P_I))/T_I,\end{equation} where the $P_I\subset P$ are the faces of $P$, and $T_I$ is the torus perpendicular to $P_I$.   In line with this local viewpoint Lerman remarks in \cite{lerman} that symplectic cutting  can be generalized to functions that are not globally moment maps of torus actions -- this property is only needed in the pre-image of a neighborhood of the boundary of the polyhedral set.  If the torus $T$ is just $U(1)$ and one cuts with respect to two closed half-lines $P$ and $P'$ with a common boundary in $\mathfrak{u}(1)^*$, one can recover the original $M$ by applying Gompf's symplectic sum operation \cite{gompf} to the two cut spaces $M_P$ and $M_{P'}$.

It is obviously desirable to generalize this cutting construction to non-Abelian (compact) groups, and a number of approaches have appeared in the literature,  though so far with fewer applications (see however \cite{extendedfloer} for a recent use).  Notice that, given a Hamiltonian $K$-orbifold, one can always apply an Abelian cut with respect to the action of the maximal torus $T_K$, but the resulting cut space will in general only have an action by $T_K$, not by $K$.  A first construction of a non-Abelian cut  was given by Woodward in \cite{woodward}, and further detailed by Meinrenken \cite{meinrenken}.  This construction gives, for a Hamiltonian action of a compact group $K$ on a symplectic orbi\-fold $M$ and a polyhedral set $P$ in the positive Weyl chamber of $K$ that satisfies some conditions, a new space, $M_P$, whose Kirwan polytope is the intersection of $P$ and the Kirwan polytope of $M$.  

Woodward's construction is surgical in nature, in the style of (\ref{surgery}): 
compose the moment map $\mu_K$ with the quotient from $\mathfrak{k}^*$ to the positive Weyl chamber $\mathfrak{t}_+^*$, take the pre-image of $P$ under this map, and again collapse certain circle actions on the pre-images of the facets (i.e.\ locally apply an Abelian symplectic cut with respect to these circle actions).  Unlike the Abelian case, these circle actions do not extend to global actions on $M$, essentially because the map to $\mathfrak{t}^*_+$ is not smooth everywhere.  Intuitively, this explains why cutting in the non-Abelian case is a more subtle notion than in the case of torus actions.  For instance, in contrast to the Abelian case, non-Abelian cutting need not result in a K\"ahler structure on $M_P$ if $M$ was K\"ahler.  Indeed, in \cite{nonkahler} Woodward considers a co-adjoint orbit of $U(3)$ (which is of course K\"ahler), applies a non-Abelian symplectic cut with respect to the action of $U(2)\subset U(3)$, and shows, using earlier work of Tolman \cite{tolman}, that the resulting cut space does not possess any compatible K\"ahler structure.  

Besides the construction of \cite{woodward, meinrenken} two other definitions labeled symplectic cutting for non-Abelian group actions have appeared in the literature, one given by Paradan in \cite{paradan} (for general compact $K$) and one by Weitsman \cite{weitsman} (for $K=U(n)$, also discussed by Dancer and Swann in \cite{andrews}).  These authors used their constructions in the context of geometric quantization of non-compact Hamiltonian $K$-spaces with proper moment maps.  In both cases the cut  spaces were defined as symplectic reductions by $K$ of $M\times A$, where $A$ is a symplectic (in fact K\"ahler, even complex algebraic) space equipped with Hamiltonian left and right actions of $K$.  In the construction of Paradan $A$ is a projective smooth (toroidal) compactification of $K_{\mathbb{C}}$; in the construction of Weitsman 
$A=M_{n\times n}(\mathbb{C})$,  the space of $n\times n$ matrices with complex entries.  
Since symplectic reduction preserves K\"ahler structures, these symplectic cuts always result in K\"ahler spaces if $M$ is K\"ahler.  
A priori it is unclear how they are related to Woodward's construction; in fact both Paradan and Weitsman state their constructions are different.  

It is the aim of this note to show that in good cases 
a global quotient counterpart, in the style of (\ref{globalquotient}), to the construction of Woodward does exist.  As in the Abelian case this allows for the cut to be understood in K\"ahler geometry and even in algebro-geometric terms if $M$ is K\"ahler or an algebraic variety to begin with.  As a consequence it follows that the construction of Paradan is a special case of the construction of Woodward.  

In order to do this we proceed in two steps: the first involves the notion of a \emph{universal cut}, given as the symplectic cut of the group $K$ acting on its own cotangent bundle $T^*K$. 
We show that for a sufficiently general $P$ the cut space  $M_P$ can be obtained as the symplectic reduction of the Cartesian product of $M$ with this universal cut $(T^*K)_P$: \begin{equation}\label{product} M_P\cong\bigg(M\times (T^*K)_P\bigg)\biggit K.\end{equation}  This is highly reminiscent of the {symplectic implosion} construction of Guil\-le\-min, Jeffrey and Sjamaar \cite{implosion}, for which the action of the compact group on its own cotangent bundle also provided a universal implosion.  

After establishing this we can now focus our attention solely on discussing $(T^*K)_P$, which will take up the bulk of the paper.  At this point, we restrict ourselves even further to cuts where the polyhedral set is given by the intersection of a Weyl-invariant polyhedral set in $\mathfrak{t}^*$ with $\mathfrak{t}^*_+$ (with some mild extra conditions; an example is given in Figure \ref{outwardpolyt}).  Though restrictive this is still sufficient to obtain compact $M_P$ if the original moment map was proper.  In these cases we then establish a global construction for the universal cut, as a symplectic reduction or geometric invariant theory quotient of a certain affine variety.  This construction appeared recently in  other  work of the authors, \cite{us}, where $(T^*K)_P$, which in algebraic geometry is a compactification of $K_{\mathbb{C}}$, was shown to be a moduli space of $K_{\mathbb{C}}$-bundles on chains of projective lines.

\begin{figure}[h!]
\begin{center}
\begin{tikzpicture}
\begin{scope}[scale=1.5]
\fill[red!20!white] (0,0)--(2,0)--(2,.75)--(1.75,1.5)-- ++(150:.78) -- cycle;
\draw[very thick] (3,0)--(0,0)--(60:3);
\draw (2,0)--(2,.75);
\draw (2,.75)--(1.75,1.5);
\draw (1.75,1.5) -- ++(150:.75);
\draw[very thin] (2.15,0)--(2.15,.15)--(2,.15);
\draw[very thin] (1.75,1.5) ++(150:.8) ++(.243,0.028)+(150:.15) -- +(0,0) -- +(-120:.15);
\draw[blue,->] (2,.375) -- ++(.75,0);
\draw[blue,->] (1.875,1.125) -- ++(20:.75);
\draw[blue,->](1.75,1.5) ++(150:.39) -- ++(60:.75);
\end{scope}
\end{tikzpicture} 
\caption{\label{outwardpolyt} \emph{A polytope with outward normal vectors in the positive Weyl chamber, meeting any wall of the Weyl chamber perpendicularly.}}
\end{center}
\end{figure}
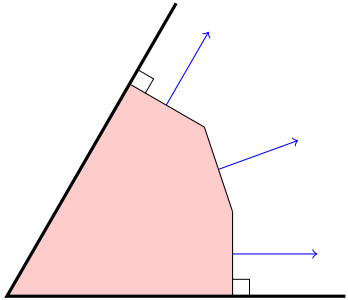

The main technical tool that allows us to do this is the remarkable \emph{Vinberg monoid}, introduced in \cite{vinberg}.  One can interpret this monoid as the total space of a particular $K_{\mathbb{C}}\times K_{\mathbb{C}}$-equivariant degeneration of $K_{\mathbb{C}}$, in such a way that the degenerate fibers now do possess the extra needed symmetry.  The simplest non-trivial example of this is $K_{\mathbb{C}}=SL(2,\mathbb{C})$.  The function to the positive Weyl chamber which one uses to apply a symplectic cut \`a la Woodward here is \begin{equation}\label{root}A\mapsto \sqrt{-\det\left(A^*A-\frac{1}{2}\Tra (A^*A)I_{2\times 2}\right)},\end{equation} the absolute value of the eigenvalue of the $SU(2)$ moment map. Since this function is not smooth everywhere on $SL(2,\mathbb{C})$,  it cannot be the Hamiltonian of a globally defined $U(1)$-action.  The Vinberg monoid in this case is simply $M_{2\times 2}(\mathbb{C})$, which we can think of as a degeneration of $SL(2,\mathbb{C})$ to the subvariety of singular matrices.  This  subvariety is singular (a cone over a quadric); however it has, unlike any of the other fibers of the determinant, an extra symmetry besides the $SU(2)\times SU(2)$-action, as it is also preserved under scalar multiplication.  On the smooth locus this global $U(1)$ action has (\ref{root}) as Hamiltonian (up to a factor $\frac{1}{2}$, see Section \ref{cox-vinberg}).

 The outline of the paper is as follows: in Section \ref{prelims} we discuss some preliminaries 
 and we describe the non-Abelian cut construction as given in \cite{woodward,meinrenken}. 
 To set the tone for the rest of the paper we also recall the Delzant construction of toric orbifolds and show that it can be interpreted as an (Abelian) symplectic cut of the cotangent bundle of the compact torus in the vein described above; this reinterpretation even extends its use. 
In Section \ref{univcut} we then restrict to cutting with respect to a universal polyhedral set and show (\ref{product}). 
In \ref{toroidal} we further restrict to the case where the (outward) normal vectors to the faces of the polyhedral set are all in the positive Weyl chamber.  We recall the \emph{Cox-Vinberg} construction introduced by the authors in \cite{us}, and show that it corresponds in symplectic geometry to the cut of $T^*K$.  This allows us to formulate in Corollary \ref{nonabglobalquotient} the generalization of (\ref{globalquotient}) to the non-Abelian case, as a torus quotient of the total space of a degeneration of $M$ based on the Vinberg monoid.
In \ref{parad} we mention how this recovers the cuts used by Paradan.  We briefly discuss the cut of Weitsman in \ref{weitsm}.  This construction, which applies to Hamiltonian $U(n)$-actions, is not a special case of Woodward's definition, but it can be described through a local surgery method which we outline.  Finally, in Appendices A and B we describe some of the symplectic geometry of complex reductive groups and reductive monoids necessary for the proof of Theorem \ref{mainprop}.
 
 \subsection*{Acknowledgements}  The authors would like to thank
Eugene Lerman, Reyer Sjamaar, Chris Woodward, Andrew Swann, Hans-Christian Herbig, Brendan McLellan, Lisa Jeffrey and, via MathOverflow, Reimundo Heluani and Peter Kronheimer for useful conversations.

\section{Preliminaries}\label{prelims}

\subsection{Notation \& basic conventions}

Let $K$ be a compact connected Lie group with Lie algebra $\mathfrak{k}$, and $G=K_{\mathbb{C}}$ its complexification, a complex reductive group. 
We fix a maximal torus $T\subset K$ with Lie algebra $\mathfrak{t}$, giving $T_{\mathbb{C}}\subset G$, and we denote the Weyl group by $W$.  We choose closed positive Weyl chambers, denoted by $\mathfrak{t}_+$ and $\mathfrak{t}_+^*$.   We use $\langle.,.\rangle$ for the pairing between $\mathfrak{t}$ and $\mathfrak{t}^*$, and we shall denote by $\tau$ the involution given by $\tau(x)=-w(x)$, where $w$ is the longest element of $W$.

Slightly adapting the terminology of Cox et al.\ \cite{coxbigbook} and
Hausel-Sturmfels \cite{haussturm}, we say a variety is {\it
  semiprojective} if it is projective over an affine variety.
We shall use symplectic reduction, 
K\"ahler quotients and geometric invariant theory (GIT) quotients, and shall denote them all by $$M\gitat{\xi}K\hspace{1cm}\text{or}\hspace{1cm}M\gitat{\xi} G,$$  where $\xi$ either indicates the central value in $\mathfrak{k}^*$ at which the symplectic reduction is taken, or the linearization used for the GIT quotient.

All the GIT quotients we encounter will have no properly semi\-stable points.  Moreover we shall always consider the orbifold (smooth Deligne-Mumford stack with trivial generic stabilizer) $\left[ M^s/G\right]$, and we shall abuse notation by still referring to this stack-theoretic quotient as the GIT quotient rather than to its coarse moduli space (which is what is normally understood as the GIT quotient).  Likewise we shall be somewhat cavalier in the symplectic category when talking about orbi\-folds and Hamiltonian group actions on them; we refer to \cite{lermanorbi} for all background.

We have actions of $K$ on itself and on $G$, which we shall denote uniformly by $\mathcal{L}_k(g)=kg$ and $\mathcal{R}_k(g)=gk^{-1}$.   We shall identify elements in $\mathfrak{k}$ with left-invariant vector fields and as such obtain identifications $$TK\cong K\times\mathfrak{k}\hspace{1cm}\text{and}\hspace{1cm}T^*K\cong K\times \mathfrak{k}^*.$$  The actions $\mathcal{L}$ and $\mathcal{R}$ of $K$ on $K$ lift to $T^*K$, and in the above trivialization they are given by $$\mathcal{L}_{\tilde{k}}(k,\lambda)=(\tilde{k}k,\lambda)\hspace{1cm}\text{and}\hspace{1cm}\mathcal{R}_{\tilde{k}}(k,\lambda)=(k\tilde{k}^{-1},\Adj^*_{\tilde{k}}\lambda) .$$
Both are Hamiltonian with respect to the canonical symplectic form,  with moment maps respectively given by $${\mu}^{\mathcal{L}}(k,\lambda)=-\Adj^*_k(\lambda)\hspace{1cm}\text{and}\hspace{1cm}{\mu}^{\mathcal{R}}(k,\lambda)=\lambda.$$

Given a matrix $A\in M_{N\times N}(\mathbb{C})$, we shall denote its Hermitian conjugate (i.e.\ conjugate transposed) by $A^*=\overline{A}^t$.  

\subsection{Labeled polytopes and stacky fans}

As mentioned before, when making a symplectic cut, Abelian or non-Abelian, we will need to specify a rational polyhedral set $P$, i.e.\ a set cut out by a finite number of half spaces determined by \begin{equation}\label{ineqs}\langle \beta_i,x\rangle\leq\xi_i,\end{equation}  where the variable $x$ ranges over $\mathfrak{t}^*$, the $\xi_i$ are real numbers, and the outward normal vectors $\beta_i$ are integral vectors in $\mathfrak{t}_{\mathbb{Z}}$.  Often one takes the $\beta_i$ to be indivisible in the integer lattice, but when working in an orbifold setting it is useful to relax this condition, and to allow the $\beta_i$ to be positive integer multiples of the minimal integral outward normal vectors to the facets of $P$.  One can indicate this by labeling the facets of $P$ with positive integers, as done in \cite{lermantolman}.  These extra data make the fan determined by $P$ into a \emph{stacky fan} as in \cite{BCS,FMN}; see Figure \ref{stacky} for an illustration.  As this creates no further complications otherwise, we shall throughout tacitly assume that such a choice of labeling or stacky fan has been made, which we shall indicate in the pictures by drawing the $\beta_i$ as normal vectors to the facets of $P$.  In the non-Abelian situation we will restrict the $P$ determined by (\ref{ineqs}) to $\mathfrak{t}^*_+$; we shall always assume that each half-space has a non-empty intersection with $\mathfrak{t}^*_+$.

\begin{figure}[h!]
\begin{center}
\begin{tikzpicture}
\begin{scope}[scale=1,xshift=-3cm]
\fill[red!20!white]
(-3,0)--(0,0)--(1.5,-1.5)--(0,-3) arc(270:180:3);
\draw[very thick](-3,0)--(0,0) --(1.5,-1.5)--(0,-3);
\draw (-1.5,0)  node[anchor=south]{$2$};
\draw (.75,-.75) node[anchor=south west]{$1$};
\draw (.75,-2.25) node[anchor=north west]{$3$};
\end{scope}

\begin{scope}[xshift=3cm,yshift=-1cm]
\fill[pattern= north west lines, pattern color = red!20!white] (-1,2.3) -- (-1,0) -- (1.3,2.3) -- cycle;
\fill[pattern= vertical lines, pattern color = red!20!white] (2.3,2.3)--(1.3,2.3) -- (-1,0) -- (2.3,-3.3)-- cycle;
\foreach \x in {-2,...,2}
\foreach \y in { -3,...,2}
{\filldraw [gray] 	(\x  , \y ) 		circle 	(2pt);}
\draw[thin](-1,0)--(-1,2.3);
\draw[thin](-1,0)--(1.3,2.3);
\draw[thin](-1,0)--(2.3,-3.3);

\draw[ultra thick, blue,->] (-1,0) -- (-1,2);
\draw[ultra thick, blue, ->] (-1,0) -- (0,1);
\draw[ultra thick, blue, ->] (-1,0) -- (2,-3);

\end{scope}
\end{tikzpicture} 

\caption{\label{stacky}\emph{The stacky fan corresponding to a labeled polyhedral set.}}
\end{center}
\end{figure}
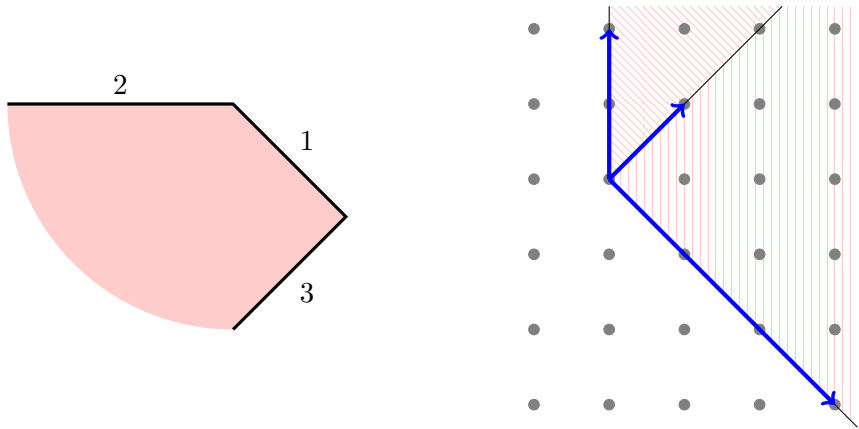

Moreover, whenever we want to interpret the cut in algebraic geometry we shall also assume that the $\xi_i$ are rational, so that we can use the $n$-tuple $\xi=(\xi_1,\ldots,\xi_n)$ to construct a fractional linearization (see e.g.\ \cite{gitflips}) with respect to which we can take a GIT quotient.  

Because we stick to an orbifold context, $P$ will always be simple (but not necessarily Delzant).  Most of what follows can however be generalized to stratified symplectic spaces, where the simple condition is no longer necessary.

\subsection{The Delzant construction as a symplectic cut of $T^*T$} \label{delzantsection}
The celebrated construction by Delzant  \cite{delzant} realizes every (compact) toric mani\-fold $M$ as a symplectic reduction of a complex vector space $\mathbb{C}^n$ (where $n$ is the number of facets of the moment polytope of $M$) by a subgroup of $U(1)^n$.  In particular, if the polytope $P$ is described by the inequalities (\ref{ineqs}), 
the $\beta_i$ determine a short exact sequence \begin{equation}\label{delzant}1\rightarrow L \rightarrow U(1)^n\rightarrow T \rightarrow 1,\end{equation} and Delzant shows that $$M\cong \mathbb{C}^n \gitat{\xi}L.$$ 
The algebro-geometric equivalent of the Delzant construction is known as the Cox construction \cite{cox}, which realizes toric varieties as categorical quotients of an open subset of $\mathbb{C}^n$ by the complexification of $U(1)^n$, and which is a GIT quotient if the toric variety is (semi)projective.

For the Delzant construction in this form to work it is crucial that the sequence (\ref{delzant}) be exact on the right (which is equivalent to saying that the $\beta_i$ generate $\mathfrak{t}$).  This is always the case for compact toric manifolds, but it often fails for toric manifolds that are non-compact but still have a proper moment map.  E.g.\ for any compact torus $T$ the cotangent bundle $T^*T$ is a toric manifold, but the (proper) moment map $$\mu_T: T^*T\cong T\times \mathfrak{t}^*\rightarrow \mathfrak{t}^*: (t, h)\mapsto h$$ is surjective, and hence there are no $
\beta_i$ at all.  One can however formulate a slight variation on the Delzant construction, which is equivalent to the Delzant construction when the sequence (\ref{delzant}) is exact on the right, but which also works for non-compact toric manifolds whose moment maps are proper onto a polyhedral set.  Indeed, we always have a $U(1)^n$-action on $\mathbb{C}^n\times T^*T$, in the usual way on the first factor and by the cotangent lift of the action $\beta:U(1)^n\actson T$ determined by the $\beta_i$ on the second.  Then one can simply use \begin{equation}\label{altdelz}
M\cong  \Big( \mathbb{C}^n\times T^*T\Big)\biggitat{\xi} U(1)^n = (T^*T)_P.\end{equation}

This variation has the additional feature that it manifestly realizes the toric manifold as an (Abelian) symplectic cut of $T^*T$.  Visually we can just interpret every factor of $U(1)^n$ cutting down the surjective image of the moment map for the action of $T$ on $T^*T$ by the corresponding half-space, finally resulting in the desired polyhedral set $P$.

\subsection{Non-Abelian symplectic cutting}\label{nonabcut}
We shall briefly review the construction given in \cite[\S8]{woodward} and \cite[\S 6]{meinrenken}.  Strictly speaking Woodward introduced the cut with respect to a single hyperplane; the natural generalization to polyhedral sets was given by Meinrenken, whose exposition we shall summarize.  

Let $M$ be a Hamiltonian $K$-orbifold, with moment map $\mu_K:M\rightarrow \mathfrak{k}^*$.  We have $\mathfrak{t}^*/W \cong \mathfrak{k}^*/K$, and we can identify $\mathfrak{t}^*/W$ with $\mathfrak{t}^*_+$, a fundamental domain for the $W$-action.  We have a canonical inclusion $\mathfrak{t}^*\hookrightarrow \mathfrak{k}^*$ as the invariant part under the coadjoint representation of $T_K$ on $\mathfrak{k}^*$, and in fact the triangle
\begin{center}
\begin{tikzpicture}
\matrix (n)[matrix of math nodes,  row sep=3em, column sep=2.5em, text height=1.5ex, text depth=0.25ex]
{\mathfrak{t}^* & & \mathfrak{k}^* \\ & \mathfrak{t}^*_+\cong \mathfrak{t}^*/W \cong \mathfrak{k}^*/K & \\ };
\path[->]
(n-1-1) edge (n-1-3) edge (n-2-2);
\path[->]
(n-1-3) edge node[auto] {$q$} (n-2-2);
\end{tikzpicture}
\end{center}
commutes.  We denote by $\Phi$ the composition \begin{equation}\label{quotient}\Phi:M\overset{\mu_K}{\longrightarrow} \mathfrak{k}^* \overset{q}{\longrightarrow} \mathfrak{t}^*_+.\end{equation}  By a theorem of Kirwan \cite{kirwanconv} $\Phi(M)$ is a polytope if $M$ is compact, often referred to as the \emph{Kirwan polytope}\footnote{By using non-Abelian cutting this implies that if $M$ is not compact but has a proper moment map then $\Phi(M)$ is a locally polyhedral set -- this was also proven (before the advent of symplectic cutting) in \cite{otherconv}.  We shall abuse terminology and still refer to $\Phi(M)$ as the Kirwan polytope.}.
Now let a polyhedral set $P$ be given in $\mathfrak{t}^*_+$, determined by inequalities (\ref{ineqs}), that satisfies a few extra properties (see Definition \ref{admissible} below). 
The basic idea is to cut with respect to the functions $f_i=\langle \beta_i,\Phi(.)\rangle$ -- i.e.\ take the pre-image of $P$ under $\Phi$ and collapse by a circle action that has $f_i$ as Hamiltonian on the $i$-th facet.  The functions $f_i$ are however not globally smooth (because $q$ is not), and therefore cannot generate global $U(1)$ actions.  The approach of \cite{woodward} and \cite{meinrenken} is therefore to just work locally and observe that this is sufficient.

In what follows, when we refer to a `face of $P$', we just mean the intersection of $P$ with a finite number of hyperplanes $\langle \beta_i,x\rangle=\xi_i$, i.e.\ the walls of the Weyl chamber itself are not taken into account.  Likewise the interior of $P$ is just given by $$\Int(P) = \mathfrak{t}^*_+ \bigcap\ \bigg\{ \langle \beta_i,x\rangle<\xi_i\bigg\};$$ in particular this can contain elements on walls of the Weyl chamber.  We shall denote faces of $P$ by $P_I$, where $I\subset\{1,\ldots,n\}$ indicates which inequalities to set to equalities.  For each such $P_I$, we shall denote by $T_I$ the group $\prod_{i\in I}U(1)_i$, which comes with a homomorphism $\phi_I$ to $T$ by the $(\beta_i)_{i\in I}$.

\begin{definition}\label{admissible} Given $M$ as above, we say $P$ is \emph{admissible} with respect to $M$ if the following three conditions are satisfied:
\begin{enumerate}
\item The affine hyperplanes $\langle \beta_i,x\rangle=\xi_i$ are all transversal (i.e.\ $P$ is simple in $\mathfrak{t}^*$).
\item For all faces $P_I$ of $P$, and all $x\in \mu_K^{-1}(P_I\cap \mathfrak{t}^*_+), \mathfrak{k}_x\cap \mathfrak{t}_I=\{ 0\}$, where $\mathfrak{k}_x$ is the Lie algebra of the stabilizer of $x$ in $K$.
\item  \label{condition} For all faces $P_I$ of $P$ meeting a face $\sigma$ of $\mathfrak{t}^*_+$ in $\Phi(M)$, the tangent space to $P_I$ contains the affine subspace perpendicular to $\sigma$.
\end{enumerate}
\end{definition}

Given a face $\sigma$ of $\mathfrak{t}_+^*$, we denote by $K_{\sigma}$ the stabilizer group for the coadjoint action, and $A_{\sigma}$ its connected center. Since $T\subset K_{\sigma}$, we always have $A_{\sigma}\subset T$. For any such $\sigma$, write 
\begin{equation}\label{YS}U_{\sigma}=\bigcup_{\sigma \subset \widetilde{\sigma}} \Int(\widetilde{\sigma})\hspace{1cm} \text{and} \hspace{1cm} Y_{\sigma}=\mu^{-1}\left(\Adj^*(K_{\sigma})(U_{\sigma})\right).\end{equation}  By the symplectic cross-section theorem (\cite[Theorem 6.1]{meinrenken},\cite[\S 26]{inphysics}), $Y_{\sigma}$ is a Hamiltonian $K_{\sigma}$-space, and the action of $A_{\sigma}$ on $Y_{\sigma}$ extends uniquely to an action on $M_{\sigma}=\Phi^{-1}\left(U_{\sigma}\right)$ that commutes with the action of $K$.  The moment map for this $A_{\sigma}$ action is given by \begin{equation}\label{characterization}\mu_{A_{\sigma}}=\pi_{\sigma}\circ \Phi,\end{equation} where $\pi_{\sigma}$ is the natural map $\mathfrak{t}^*\rightarrow\mathfrak{a}_{\sigma}^*$.  Notice that this action of $A_{\sigma}$ is in general not the induced action of a subgroup of $K$.  

Now suppose we have a $P$ admissible with respect to $M$.  
By condition (\ref{condition}) of Definition (\ref{admissible}) $\phi_I(T_I)$ is a subgroup of $A_{\sigma}$ for all $\sigma$ with $P_I\cap \sigma\neq \emptyset$.  One can choose a neighborhood $\sigma\subset V_{\sigma}\subset U_{\sigma}$ for any $\sigma$ such that \begin{equation}\label{neighchar}V_{\sigma}\cap 
P=V_{\sigma}\cap \pi_{\sigma}^{-1}\left(P\cap \mathfrak{a}_{\sigma}^*\right)\end{equation} (observe that we have canonical inclusions $\mathfrak{a}^*_{\sigma}\subset \mathfrak{t}^*$).  
By using the $A_{\sigma}$-actions we can take the (Abelian) symplectic cuts $\Phi^{-1}\left(V_{\sigma}\right)_{P_{\sigma}}$, where $P_{\sigma}=P\cap \mathfrak{a}_{\sigma}^*$.  Moreover, the $\Phi^{-1}\left(V_{\sigma}\right)$ cover $M$, and we can glue the local cuts $\Phi^{-1}\left(V_{\sigma}\right)_{P_{\sigma}}$ together to obtain a new Hamiltonian $K$-orbifold, which we refer to as the cut space $M_P$.  We have $$\Phi(M_P)=\Phi(M)\cap P,$$
and there is a decomposition into symplectic suborbifolds
\begin{equation}\label{decomp}M_P=\bigcup_{P_I\subset P} \Phi^{-1}\Big( \Int(P_I)\Big)\! \Big/T_I.\end{equation}

\begin{definition}
We say $P$ is \emph{universal} if $P$ is admissible with respect to $T^*K$, equipped with the $K$-action $\mathcal{R}$.
\end{definition}
This just means that $P$ is simple and that if a face of $P$ intersects a wall of the Weyl chamber $\mathfrak{t}^*_+$, it does so perpendicularly, as illustrated in Figure \ref{su3ex}.
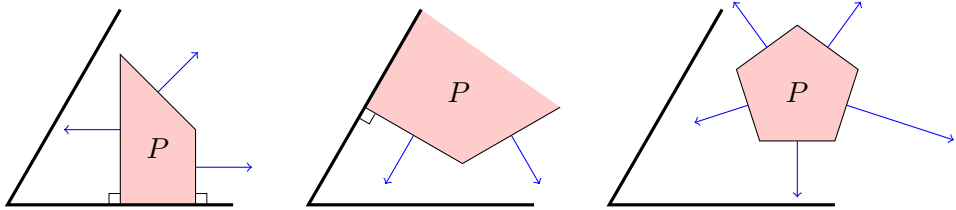
\begin{figure}[h]
\begin{center}
\begin{tikzpicture}

\begin{scope}[xshift=-4cm]
\fill[red!20!white](.5,0)+(1,0)-- +(1,2) -- +(2,1)-- +(2,0)-- cycle;

\draw[very thick] (3,0)--(0,0)--(60:3);
\draw (.5,0)+(1,0)-- +(1,2) -- +(2,1)-- +(2,0);
\draw[very thin] (2.65,0)--(2.65,.15)--(2.5,.15);
\draw[very thin] (1.35,0)--(1.35,.15)--(1.5,.15);
\draw (2,.75) node {$P$};

\draw[blue,->] (2.5,.5) -- ++(.75,0);
\draw[blue,->] (1.5,1) -- ++(-.75,0);
\draw[blue,->] (2,1.5) -- ++(45:.75);

\end{scope}

\begin{scope}
\fill[red!20!white](60:1.5)+(60:1.5)-- +(0,0)-- +(150:.25) -- ++(-30:1.5) -- +(30:1.5)--cycle;

\draw (60:1.5)-- ++(-30:1.5) -- +(30:1.5);

\draw[very thick] (3,0)--(0,0)--(60:3);
\draw[very thin](60:1.5)++(-75:.23)+(60:.15) -- +(0,0)--+(150:.15);

\draw[blue,->] (60:1.5)++(-30:.75) -- ++(240:.75);
\draw[blue,->] (60:1.5) ++(-30:1.5)++(30:.75)--+(-60:.75);
(60:1.5)++(-30:.75) -- ++(240:.75);

\draw (2,1.5) node {$P$};

\end{scope}

\begin{scope}[xshift=4cm]
\draw[very thick] (3,0)--(0,0)--(60:3);

\fill[red!20!white] (2,.85) -- ++(0:1)-- ++(72:1)--++(144:1)--++(216:1)--cycle;
\draw (2,.85) -- ++(0:1)-- ++(72:1)--++(144:1)--++(216:1)--cycle;

\draw (2.5,1.5) node {$P$};
\draw[blue,->] (2.5,.85)--+(0,-.75);
\draw[blue,->] (2,.85) ++(0:1) ++(72:.5)--+(-18:1.5);
\draw[blue,->](2,.85)  ++(0:1)++(72:1)++(144:.5)--+(54:.75);
\draw[blue,->] (2,.85) ++(0:1)++(72:1)++(144:1)++(216:.5)--+(126:.75);
\draw[blue,->] (2,.85) ++(0:1)++(72:1)++(144:1)++(216:1)++(288:.5)--+(198:.75);

\end{scope}

\end{tikzpicture}
\caption{\label{su3ex}\emph{Examples of universal polyhedral sets $P$ for $K=SU(3)$.}}
\end{center}
\end{figure}
\begin{remark}
Given $M$, not all admissible polyhedral sets need be universal.  The cut employed by Woodward in \cite{nonkahler} on a coadjoint $U(3)$-orbit (see Figure \ref{nonuniv}) cannot be made with respect to a universal polyhedral set.
\end{remark}

\begin{figure}[h]
\begin{center}
\begin{tikzpicture}

\draw[decorate, thick,decoration=brace] (2.2,3)--(2.2,0);
\draw (2.4,1.5) node[anchor=west]{$\mathfrak{t}^*_+$};
\draw (-1,.5) node[anchor=east]{$\Phi_K(M)$};
\draw[thick,->] (-1,.5) .. controls (-.75,.5) and (-.5,.5) .. (-.1,1);

\fill[red!20!white] (.8,0) +(-1.5,3)-- +(0,0) -- (2,0) -- (2,3) -- cycle;
\draw[ultra thick] (-2,0)--(2,0);

\fill[pattern=horizontal lines, pattern color = yellow!60!white](1,0)--(1.4,.4)--(-.6,2.4)--(-1,2) -- cycle;
\draw[thick] (1,0)--(1.4,.4)--(-.6,2.4)--(-1,2) -- cycle;
\draw[thick] (.8,0) +(-1.5,3)-- +(.25,-.5);
\draw[thick,blue,->] (.8,0)++(-1.35,2.7) -- +(-1,-.5);
\draw (1,2) node {$P$};

\end{tikzpicture}
\caption{\label{nonuniv}\emph{The cut employed in \cite{nonkahler}.
}}

\end{center}
\end{figure}
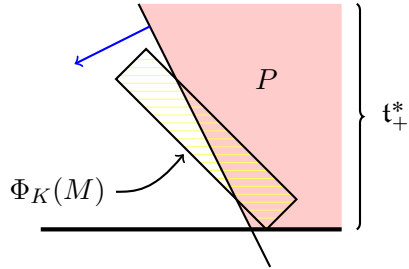

\section{A global quotient construction for cuts}

\subsection{The universal cut} \label{univcut}
We begin by introducing the notion of a universal cut, i.e.\ a cut of $T^*K$, as a tool for studying cuts for arbitrary orbifolds.  This idea is essentially applicable to any universal procedure one can apply to a Hamiltonian $K$-orbifold and was used in \cite{implosion} for symplectic implosions.
\begin{proposition}\label{universal}
Let $M$ be a Hamiltonian $K$-orbifold, and let $P$ be a universal polyhedral set in $\mathfrak{t}^*_+$, admissible with respect to $M$.  Then $$M_P\cong \Big( M\times (T^*K)_P\Big)\! \biggitat{0} K,$$  where we cut $T^*K$ with respect to the action $\mathcal{R}$, and the diagonal action of $K$ on $M\times (T^*K)_P$ uses the $\mathcal{L}$-action on the second factor.
\end{proposition}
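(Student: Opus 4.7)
Plan. The strategy combines the standard ``shifting'' identification
\[
M \;\cong\; (M\times T^*K)\biggitat{0} K,
\]
in which $K$ acts diagonally by its given action on $M$ and by $\mathcal{L}$ on $T^*K$, and where the residual $K$-action from $\mathcal{R}$ recovers the original action on $M$ via $[(m,k,\lambda)]\mapsto k^{-1}\cdot m$. We then need to observe that non-Abelian cutting commutes with reduction by a commuting Hamiltonian action.

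The first step is to check that $(T^*K)_P$ inherits a Hamiltonian $K$-action from $\mathcal{L}$, commuting with the residual $\mathcal{R}$-action. By the Meinrenken--Woodward construction of \S\ref{nonabcut}, $(T^*K)_P$ is glued from local Abelian cuts of $(\Phi^{\mathcal{R}})^{-1}(V_\sigma)$ with respect to $A_\sigma$-extensions of $\mathcal{R}|_{A_\sigma}$. By the symplectic cross-section theorem, these $A_\sigma$-extensions are uniquely characterized by commuting with $\mathcal{R}|_K$ and by their moment maps $\pi_\sigma\circ\Phi^{\mathcal{R}}$. Since $\mathcal{L}$ commutes with $\mathcal{R}$ on $T^*K$ and preserves $\mu^{\mathcal{R}}(k,\lambda)=\lambda$, the uniqueness statement forces each such $A_\sigma$-extension to commute with $\mathcal{L}$. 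Consequently $\mathcal{L}$ passes through each local Abelian cut and glues to a Hamiltonian $K$-action on $(T^*K)_P$.

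Next, we form $M\times(T^*K)_P$ with the diagonal $K$-action (given on $M$, residual $\mathcal{L}$ on $(T^*K)_P$) and reduce at $0$. We match this reduction with $M_P$ stratum by stratum, using the decomposition (\ref{decomp}). On the open stratum $\Phi_M^{-1}(\Int P)$ the shifting identification $m\mapsto[(m,e,\mu_M(m))]$ is a symplectomorphism, since the moment map condition $\mu_M(m)=\Adj^*_k\lambda$ forces $\Phi_M(m)=q(\lambda)$, so $\Phi_M(m)\in\Int P$ corresponds to $q(\lambda)\in\Int P$.

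On a boundary face $P_I$, the $T_I$-quotient stratum of $(T^*K)_P$ is $(\Phi^{\mathcal{R}})^{-1}(\Int P_I)/T_I$, where $T_I$ acts via the $A_\sigma$-extension of $\mathcal{R}$. Imposing the $\mathcal{L}$-moment-map condition on the product and using $\mathcal{L}$ to gauge $k=e$ reduces the data to $m\in M$ with $\Phi_M(m)\in\Int P_I$, modulo a residual $T_I$-action on $M$. The crucial point is that this residual action coincides with the $T_I$-action in (\ref{decomp}): both are $A_\sigma$-actions extending the ambient $K$-action on $M_\sigma$, with the same moment map $\pi_\sigma\circ\Phi_M$ by (\ref{characterization}) together with the identity $\Phi_M=q\circ\mu^{\mathcal{R}}$ on the level set. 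Uniqueness of the $A_\sigma$-extension then forces them to agree, and the strata match.

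The main obstacle is precisely this last matching of $A_\sigma$-actions across the shifting identification: it is where the proof has to unpack the abstract symplectic cross-section extension on both sides and verify that they coincide. Everything else is a formal manipulation of the two commuting operations --- cutting and $\mathcal{L}$-reduction --- built from commuting $K$-actions; in the Abelian case $K=T$ the argument degenerates to routine reduction in stages.
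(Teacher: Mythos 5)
Your proposal is correct and follows essentially the same route as the paper: the paper's proof consists precisely of the shifting identification $M\cong (M\times T^*K)\gitat{0}K$ together with the observation that symplectic cutting (for a universal, admissible $P$) commutes with reduction by a commuting Hamiltonian action. Your stratum-by-stratum matching of the $A_\sigma$-extensions is simply a careful unpacking of that commutation statement, which the paper leaves to the reader.
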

The above choices of actions are just made for convenience.  Notice that cutting $T^*K$ using a polyhedral set $P$ and the action $\mathcal{R}$ is the same as cutting it with respect to $\tau(P)$ and the action $\mathcal{L}$.
\begin{proof}
First recall (see e.g.\ \cite[Lemma 4.8]{implosion}) that given any $M$ as above, we have that, as Hamiltonian $K$-orbifolds, $$M\cong \left( M\times T^*K\right)\gitat{0} K.$$  
To complete the proof it just suffices to observe that, given two commuting Hamiltonian actions, symplectic cutting for one (with respect to a universal and admissible polyhedral set) and symplectic reduction for the other commute.
\end{proof}
In the Abelian case $K=T$, Proposition \ref{universal}, together with (\ref{altdelz}), just states the well-known fact that the Abelian symplectic cut can be realized as the reduction by $T$ of the product of $M$ and the toric orbifold determined by $P$.

\subsection{Universal cuts and toroidal $G$-embeddings}\label{toroidal}

\subsubsection{Outward-positive polyhedral sets}

Because of Proposition \ref{universal}, we can restrict ourselves to studying $(T^*K)_P$ when cutting with respect to a universal $P$.  For the rest of the paper, we shall make a further restriction on the $P$ that we use.

\begin{definition}
A polyhedral set $P$ in $\mathfrak{t}^*_+$ determined by a finite number of inequalities of the form (\ref{ineqs}) is said to be \emph{outward-positive} 
if all the $\beta_i$ are contained in the positive Weyl chamber $\mathfrak{t}_+$.
\end{definition}
See Figure \ref{outex} for some examples of outward-positive polyhedral sets (note that none of the universal polyhedral sets of Figure \ref{su3ex} are outward-positive).

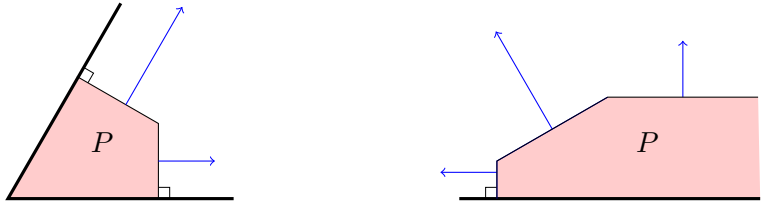
\begin{figure}[h]
\begin{center}
\begin{tikzpicture}
\begin{scope}[xshift=-4cm,scale=1]
\fill[red!20!white] (0,0)--(2,0)--(2,1)-- ++(150:1.25) -- cycle;
\draw[very thick] (3,0)--(0,0)--(60:3);
\draw (2,0)--(2,1)-- ++(150:1.25);
\draw[very thin] (2.15,0)--(2.15,.15)--(2,.15);
\draw[very thin] (2,1) ++(150:1.25) ++(.22,0.045)+(150:.15) -- +(0,0) -- +(-120:.15);
\draw[blue,->] (2,.5) -- ++(.75,0);
\draw[blue,->](2,1) ++(150:.5) -- ++(60:1.5);
\draw (1.25,.75) node {$P$};
\end{scope}

\begin{scope}[xshift=4cm]

\fill[red!20!white] (-1.5,0)-- ++(0,.5)-- ++(30:1.7)--++(0:2)--(2,0)--cycle;

\draw[very thick] (-2,0)--(2,0);
\draw[very thin] (-1.65,.15)+(.15,0)--+(0,0)--+(0,-.15);
\draw (.5,.75) node {$P$};

\draw[blue,->] (-1.5,.35) -- ++(-.75,0);
\draw[blue,->] (-1.5,0) ++(0,.5) ++(30:.85)--+(120:1.5);
\draw[blue,->]  (-1.5,0)-- ++(0,.5)-- ++(30:1.7)++(0:1)--+(0,.75);

\draw (-1.5,0) -- ++(0,.5)-- ++(30:1.7)--++(0:2);

\end{scope}

\end{tikzpicture}
\caption{\label{outex}\emph{Examples of universal outward-positive polyhedral sets $P$ for $K=SU(3)$ and $K=U(2)$.}}
\end{center}
\end{figure}

\begin{remark}  When cutting with an outward-positive polyhedral set $P$, there is little loss of generality in assuming that $P$ is also universal.  Indeed, suppose the moment map for $M$ is proper and $P$ is admissible with respect to $M$ and outward-positive, but not necessarily universal.  Then one can always impose some extra inequalities to obtain a new outward-positive polyhedral set $\widetilde{P}\subset P$ such that $\widetilde{P}$ is universal and $M_P\cong M_{\widetilde{P}}$; see Figure \ref{smaller} for an example.  This is not true if $P$ is not outward-positive.
\end{remark}

\begin{figure}[h]
\begin{center}
\begin{tikzpicture}
\begin{scope}[xshift=3cm,scale=1]

\fill[red!20!white] (2.2,0)++(110:.4)+(0,-.4)--+(0,0)--++(110:1.5)--++(150:.45)--(0,0)--cycle;

\fill[pattern=north west lines, pattern color = yellow!60!white](1.5,0)--++(60:1.5)--++(180:1.5)--cycle;
\draw  (1.5,0)--++(60:1.5)--++(180:1.5)--cycle;

\draw (2.2,0)++(110:.4)+(0,-.38)--+(0,0)--++(110:1.5)--++(150:.45);

\draw[very thick] (3,0)--(0,0)--(60:3);

\draw[very thin] (2.2,0)++(110:.4)++(.15,-.2) -- +(0,-.15)-- +(0,0) -- +(-.15,0);
\draw[very thin] (2.2,0)++(110:.4)+(0,-.4)+(0,0)++(110:1.5)++(150:.45)++(.21,.06)--+(-120:.15)--+(0,0)--+(150:.15);
\draw (-1.475,.7) node {$\Phi_K(M)$};
\draw[thick,->] (-.75,.7)..controls (-.25,.7)and (.35,.7) .. (.85,1);
\draw[thick,->] (-2.25,.7).. controls(-2.75,.7) and (-3.45,0.7).. (-3.85,1);

\draw (.5,.35) node {$\widetilde{P}$};
\draw[thick,blue,->](2.2,0)++(110:1.6)--+(20:.75);
\draw[thick,blue,->](1.3,1.93)++(-30:.17)--+(60:.75);
\draw[thick,blue,->](2.074,.25)--+(0:.75);

\end{scope}

\begin{scope}[xshift=-3cm,scale=1]

\fill[red!20!white] (2.2,0)--+(110:2.5)--(0,0)--cycle;

\fill[pattern=north west lines, pattern color = yellow!60!white](1.5,0)--++(60:1.5)--++(180:1.5)--cycle;
\draw (1.5,0)--++(60:1.5)--++(180:1.5)--cycle;

\draw (2.2,0)+(290:.2)--+(110:3);
\draw[very thick] (3,0)--(0,0)--(60:3);
\draw[thick,blue,->](2.2,0)++(110:2)--+(20:.75);

\draw (.5,.35) node {$P$};

\end{scope}

\end{tikzpicture}
\caption{\label{smaller}\emph{A symplectic cut made with a non-universal outward-positive polyhedral set $P$ and the same cut obtained with a universal outward-positive $\widetilde{P}\subset P$.}}
\end{center}
\end{figure}
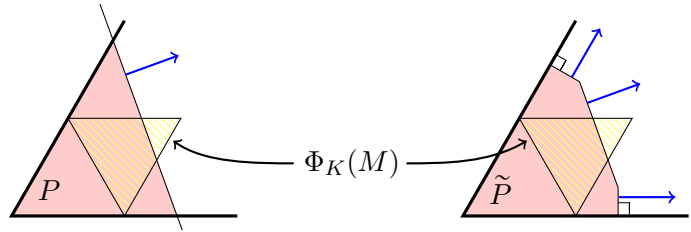

An outward-positive polyhedral set $P$ can always be written as the intersection of $\mathfrak{t}^*_+$ with a $W$-invariant polyhedral set $WP$ in $\mathfrak{t}^*$; if $P$ is moreover also universal this $WP$ will have all of its vertices in the interiors of Weyl chambers.
Given such a $P$ we shall denote its (stacky) fan of normal vectors by $\Sigma$; 
the support of $\Sigma$ is entirely contained in 
$\mathfrak{t}_+$.

 Our aim is now to show that if the polyhedral set is outward-positive 
then $(T^*K)_P$ can be understood in algebro-geometric terms as a (polarized) toroidal spherical embedding of $G$.  

These embeddings where studied in \cite{us}, where it was shown they can be interpreted as moduli spaces of framed $G$-bundles on chains of projective lines.  In \cite{us} they were denoted by $\mathcal{M}_G(\Sigma)$, but since we shall always consider them here with a choice of a (rational) polarization we shall refer to them as $\mathcal{M}_G(P)$.  These are smooth Deligne-Mumford stacks, with trivial generic stabilizer and a $G\times G$ -action.  Their coarse moduli spaces are semiprojective  toroidal spherical $G\times G$-varieties.   If $P$ is compact the $\mathcal{M}_G(P)$ are compactifications of $G$; $P$ can then also be described as the intersection of $\mathfrak{t}^*_+$ with the convex hull of the Weyl orbit of a finite number of points in the interior of $\mathfrak{t}^*_+$.

Of particular relevance here is that in \cite{us} a construction was given, dubbed the Cox-Vinberg quotient, that realizes $\mathcal{M}_G(\Sigma)$ as a torus quotient of an open subvariety of a certain affine variety.
If $\mathcal{M}_G(\Sigma)$ is semiprojective, which is always the case in our current context, the Cox-Vinberg quotient can be understood as a GIT quotient.  We shall here consider it as a symplectic reduction, in which sense it generalizes the variant on the Delzant construction outlined in Section \ref{delzantsection}.

Notice that, if $(T^*K)_P$ is a compact manifold, the Delzant conjecture, now proven in \cite{knop} and \cite{losev}, shows that $(T^*K)_P$ is determined up to equivariant symplectomorphism by its Kirwan polytope, since it is multiplicity-free.  This could be used to show the symplectomorphism we want to establish\footnote{Indeed, this strategy was used by Manolescu and Woodward for the wonderful compactification of an adjoint group in the unpublished \cite{extendedcut}.  We are grateful to the authors for sharing this manuscript with us.}.  Our strategy for the proof is entirely different however, as we aim to clarify the relationship between the construction of $(T^*K)_P$ as a symplectic cut and the Cox-Vinberg quotient.   This has as an added advantage that it also works if $(T^*K)_P$ is not compact or if it is an orbifold (or possibly even if it is singular and interpreted as a stratified symplectic space).  It seems very plausible that a generalization of the Delzant conjecture holds true  for Hamiltonian $K$-orbifolds with proper moment maps; it is well-known however that, in contrast to the Abelian case, the Delzant conjecture is false when one allows singular spaces.  

\subsubsection{The Cox-Vinberg construction}\label{cox-vinberg}

At the heart of this Cox-Vinberg quotient construction lies the Vinberg monoid $S_G$ of a (complex) reductive group $G$.  This is a reductive affine monoid, with group of units $$\widetilde{G}=(G\times Z)/ Z_G.$$  Here $Z$ is a torus with a given isomorphism to the maximal torus $T_G$, and $Z_G$ is the antidiagonal embedding of the center of $G$.  For $G$ semisimple $S_G$ was introduced by Vinberg in \cite{vinberg}, where it was called the enveloping semigroup of $G$.  For arbitrary reductive $G$ the definition was extended by Alexeev and Brion in \cite{alexeevbrion} (this generalization shares most of the properties $S_G$ has if $G$ is semisimple, with the possible exception of the universal property exhibited by Vinberg).  It can be described as follows:  by the algebraic Peter-Weyl theorem, the ring of regular functions of any complex reductive group $G$ decomposes as a $G\times G$-representation as $$k[G]=\bigoplus _{\lambda} k[G]_{\lambda},$$ where $k[G]_{\lambda}$ are the matrix coefficients of the irreducible representation with highest weight $\lambda$ as functions on $G$.  The Vinberg monoid is defined as the spectrum of a subring of $k[\widetilde{G}]$.  Let $\mathfrak{X}_G$ be the character lattice for $T_G$.  The character lattice of $T_{\widetilde{G}}$ is then given by $$\mathfrak{X}_{\widetilde{G}}=\bigg\{(x,y)\in \mathfrak{X}_G^2\Big|\ x-y = \sum n_i\alpha_i, \ n_i \in \mathbb{Z} \bigg\},$$ where the $\alpha_i$ are the positive simple roots of $G$ (or $K$).  

The Vinberg monoid is now defined (as a variety) by  $$S_G:=\text{Spec}\left( \bigoplus_{\lambda\ \in\ \mathfrak{X}_{\widetilde{G}} \cap Q_G}  k[\widetilde{G}]_{\lambda}\right),$$ where $Q_G$ is the cone $$Q_G:=\left\{(x, x+\sum_i m_i\alpha_i) \ \Big|\ x\in \mathfrak{t}^*_+, m_i\geq 0 \right\}.$$  The variety $S_G$ contains $\widetilde{G}$ as a dense open subvariety, and in fact Vinberg shows in \cite{vinberg} that $\SGv$ is a monoid, with a multiplication operation that extends the multiplication of $\widetilde{G}$.  Moreover the affine GIT quotient $$\mathbb{A}:=S_G\biggitat{0}\! (G\times G)$$ is the smooth affine toric variety for the torus $Z/Z_G$ determined by the cone spanned by the $\alpha_i$.  The fibers of $\pi_G:\SGv\rightarrow \mathbb{A}$ over the open orbit of $\mathbb{A}$ are all isomorphic to $G$ as $G\times G$-varieties; the fiber over the $Z/Z_G$-fixed point of $\mathbb{A}$ is referred to as the \emph{asymptotic semigroup} of $G$ by Vinberg \cite{vinberg2}.

Suppose now that a polyhedral set as above is given, with (outward) normal vectors $\beta_i$.  These $\beta_i$ determine homomorphisms $\phi_{\beta_i}$ from $\mathbb{G}_m=\mathbb{C}^*$ into $Z$ and hence also into $Z/Z_G$.  Since moreover all the $\beta_i$ are contained in the positive Weyl chamber $\mathfrak{t}_+$, the collective homomorphism $\phi_{\beta}$ from $\mathbb{G}_{\beta}:=\mathbb{G}_m^n$ to $Z/Z_G$ extends to a homomorphism of monoids $\overline{\phi}_{\beta}$ from $\mathbb{A}_{\beta}:=\mathbb{C}^n$ to $\mathbb{A}$.  We can now take the fibered product 
\begin{center}
\begin{tikzpicture}
\matrix (n)[matrix of math nodes,  row sep=3em, column sep=2.5em, text height=1.5ex, text depth=0.25ex]
{\SGS & \SGv \\ \Aff_{\beta} &  \Aff.\\};
\path[->] 
(n-1-1) edge (n-1-2) edge  
(n-2-1);
\path[->] 
(n-2-1) edge  node[auto] {$\overline{\phi}_{\beta}$} (n-2-2);
\path[->] 
(n-1-2) edge  node[auto] {$ \pi_G $} (n-2-2);

\end{tikzpicture}
\end{center}

This is a monoid with group of units $G\times \mathbb{G}_{\beta}$, flat over $\mathbb{A}_{\beta}$, with generic fibers isomorphic to $G$.  Note that $\overline{\phi}_{\beta}$ can only be the identity if $K$ is adjoint and the $\beta_i$ are the fundamental coweights $\varpi_i^{\vee}$.  This is also the reason behind the extra factor occurring in the $SL(2,\mathbb{C})$ example given in the introduction: the $S_{SL(2,\mathbb{C}),\beta}$ one wants to consider to cut $SL(2,\mathbb{C})$ is not just $M_{2\times 2}(\mathbb{C})$ (which is $S_{SL(2,\mathbb{C})}$), but rather matrices in $M_{2\times 2}(\mathbb{C})$ together with a choice of a square root of their determinant.

It is straightforward to check that we can describe $\SGS$ directly as follows.

\begin{lemma}
If we define the cone \begin{equation}\label{cone}Q_{G,\beta}:=\left\{\left(x, (\langle \beta_i,x\rangle+ m_i)_i\right) \ \Big|\ x\in \mathfrak{t}^*_+, m_i\geq 0 \right\}\subset \mathfrak{t}^*\oplus (\mathfrak{u}(1)^n)^*,\end{equation}
we have $$\SGS= \text{Spec}\left(\bigoplus_{\lambda \in \mathfrak{X}_{G\times \mathbb{G}_{\beta}}  \cap Q_{G,\beta}} k[G\times \mathbb{G}_{\beta}]_{\lambda}\right).$$
\end{lemma}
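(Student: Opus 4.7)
The plan is to translate the fibered-product definition of $\SGS$ into an identity of coordinate rings and then expand both factors in terms of their natural weight decompositions. Since all objects involved are affine, we have
\begin{equation*}
k[\SGS] \;=\; k[\SGv] \otimes_{k[\Aff]} k[\Aff_{\beta}],
\end{equation*}
so it suffices to expand the right-hand side and recognize it as the graded algebra on the cone $Q_{G,\beta}$.

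First I would regard $k[\SGv]$ as a graded $k[\Aff]$-module. Recall $k[\Aff]$ is the semigroup algebra of the cone $\sum_j \mathbb{N}\alpha_j \subset \mathfrak{X}(Z/Z_G)$, acting on $k[\SGv] \subset k[\widetilde{G}]$ by multiplication using the right $Z$-action. For a fixed dominant weight $x \in \mathfrak{X}^+_G$, the pairs $(x,y) \in Q_G$ with first coordinate $x$ are exactly $y = x + \sum_j m_j\alpha_j$ with $m_j \geq 0$; the corresponding sum of Peter--Weyl pieces
\begin{equation*}
R_x \;:=\; \bigoplus_{m_j \geq 0} k[\widetilde{G}]_{(x,\, x + \sum_j m_j \alpha_j)}
\;\cong\; V_x^* \otimes V_x \otimes k[\Aff]
\end{equation*}
is a free $k[\Aff]$-module of rank $(\dim V_x)^2$ whose generator sits in $Z$-weight $x$. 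This gives a $G\times G$-equivariant decomposition $k[\SGv] = \bigoplus_{x \in \mathfrak{X}^+_G} R_x$ as $k[\Aff]$-modules.

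Next I would identify the ring map $\overline{\phi}_{\beta}^*: k[\Aff] \to k[\Aff_{\beta}]$. By construction $\phi_{\beta}:\mathbb{G}_{\beta} \to Z/Z_G$ is the product of the cocharacters $\phi_{\beta_i}$, so the induced map on characters sends $\chi_{\alpha_j}$ to $\prod_i t_i^{\langle \alpha_j,\beta_i \rangle}$, and more generally extends along $\mathfrak{X}(Z) \supset \mathfrak{X}(Z/Z_G)$ to send any $Z$-character $\chi_\nu$ to $t^{\phi_\beta^*(\nu)} = \prod_i t_i^{\langle \nu,\beta_i\rangle}$. The hypothesis $\beta_i \in \mathfrak{t}_+$ ensures that $\langle \alpha_j,\beta_i\rangle \geq 0$, so this really lands in $k[\Aff_\beta] = k[t_1,\dots,t_n]$.

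Applying $(-)\otimes_{k[\Aff]} k[\Aff_\beta]$ to each $R_x$ therefore yields a free $k[\Aff_{\beta}]$-module
\begin{equation*}
R_x \otimes_{k[\Aff]} k[\Aff_{\beta}] \;\cong\; V_x^* \otimes V_x \otimes k[\Aff_{\beta}],
\end{equation*}
whose generator has $\mathbb{G}_{\beta}$-weight $\phi_\beta^*(x) = (\langle x,\beta_i\rangle)_i$. Its $\mathbb{G}_\beta$-graded pieces are thus indexed by $n \in \mathbb{Z}^n$ with $n_i \geq \langle x,\beta_i \rangle$, i.e.\ $n_i = \langle x,\beta_i\rangle + m_i$ with $m_i \geq 0$. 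Summing over dominant $x$ and using $k[G\times\mathbb{G}_\beta]_{(x,n)} = V_x^* \otimes V_x \otimes k\cdot t^n$, one gets exactly
\begin{equation*}
k[\SGS] \;=\; \bigoplus_{(x,n)\,\in\, \mathfrak{X}_{G\times\mathbb{G}_\beta}\cap Q_{G,\beta}} k[G\times\mathbb{G}_\beta]_{(x,n)},
\end{equation*}
which is the desired description. The only delicate point is step two: verifying that the $Z$-grading on each $R_x$ makes it free over $k[\Aff]$ so that the tensor product preserves the direct sum structure. This is immediate from Vinberg's Peter--Weyl-type decomposition of $k[\SGv]$, but must be invoked to ensure that the gradings on both sides are compatibly indexed by the translated cone $Q_{G,\beta}$ rather than some quotient of it.
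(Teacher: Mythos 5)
Your argument is correct and complete; the paper itself offers no proof of this lemma (it is declared ``straightforward to check''), and your base-change computation --- decomposing $k[\SGv]$ into the Peter--Weyl blocks $R_x$, noting each is free over $k[\Aff]$ so that $\otimes_{k[\Aff]}k[\Aff_{\beta}]$ preserves the weight decomposition, and tracking how $\phi_{\beta}^*$ translates the cone $Q_G$ into $Q_{G,\beta}$ --- is exactly the verification intended. You also correctly identify the one place where outward-positivity of the $\beta_i$ enters, namely that $\langle \alpha_j,\beta_i\rangle\geq 0$ is needed for $\overline{\phi}_{\beta}^*$ to take values in the polynomial ring $k[\Aff_{\beta}]$ rather than its localization.
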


Consider now the action of $\mathbb{G}_{\beta}$ on $\SGS$, given by the obvious action on $\mathbb{A}_{\beta}$ and the action on $S_G$ induced by the homomorphism $\mathbb{G}_{\beta}\rightarrow Z$. It is proven in \cite{us} that  we have 
\begin{equation}\label{isommodul}\SGS \gitat{\xi} \mathbb{G}_{\beta}\cong\mathcal{M}_G(P);\end{equation}  for our purposes we can take this as the definition of $\mathcal{M}_G(P)$.  

\subsubsection{Hamiltonian geometry of $\SGS$}
We want to show that, if we interpret this quotient in symplectic geometry, the result is symplectomorphic to the corresponding cut of $T^*K$.  In order to do so we need to choose a symplectic structure on $\SGS$;  
we shall choose one coming from an affine embedding by restricting the Euclidean metric on the ambient space. 

In particular, we shall choose an embedding $\iota$ that realizes $\SGS$ as a closed submonoid of some $M_{N\times N}(\mathbb{C})$. This of course determines a representation of its group of units, $G\times \mathbb{G}_{\beta}$, and $\iota$ is equivariant with respect to the left and right action of $G\times \mathbb{G}_{\beta}$ on $S_{G,\beta}$ by multiplication.  Such a choice of $\iota$ is always possible: see \cite[Remark on page 169]{vinberg}.
In this way
$\SGS$ becomes K\"ahler\footnote{Strictly speaking, since $\SGS$ a priori might have singularities, it would have to be interpreted as a stratified symplectic space.  We can ignore these issues however, since the GIT-stable subvariety of $\SGS$ is smooth, by \cite[Theorem 7.1]{us} and \cite[Theorem 8]{vinberg}.}, and the GIT construction will lead to a variety that is semiprojective 
and hence again K\"ahler by combining the Fubini-Study metric with the Euclidean one.  We shall use a maximal compact subgroup $K\times U(1)^n$ of $G\times \mathbb{G}_{\beta}$ compatible with this K\"ahler structure. 

Notice that our only ambition here is to relate a purely algebraic construction to a purely symplectic one, and we can therefore just choose a compatible K\"ahler structure to serve our purposes. The full problem of describing all of the admissible K\"ahler structures that can occur on this symplectic cut is more subtle and is not addressed here.

It follows from \cite[Theorem 4.9]{sjamaar} 
that the cone $Q_{G,\beta}$ given in (\ref{cone}) 
is the image of $\Phi_{K\times U(1)^n}$, i.e.\ the moment map $\mu_{K\times U(1)^n}$ for the $\mathcal{R}$-action of $K\times U(1)^n$ on $\SGS$ composed with the projection on the positive Weyl chamber $\mathfrak{t}^*_+\oplus (\mathfrak{u}(1)^n)^*$ (the map $\Phi_{K\times U(1)^n}$ for the $\mathcal{L}$-action is given by composing the one for the $\mathcal{R}$-action with $\tau$).

Lemma \ref{sectionconstr} gives us moreover a section of the moment map $\mu_{K\times U(1)^n}$, which we shall simply denote by $s$.  In what follows we shall furthermore denote the moment map for the  $U(1)^r$-action on $\SGS$ by $\mu$.

\subsubsection{Correspondence results}
With all of this set up, we are ready for our main result. 

\begin{theorem}\label{mainprop}
There is a $K\times K$-equivariant symplectomorphism of (real) orbifolds $$ (T^*K)_P\cong \mathcal{M}_G(P).$$
\end{theorem}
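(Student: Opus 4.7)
The plan is to construct the $K \times K$-equivariant symplectomorphism explicitly, matching both sides against the underlying Woodward-Meinrenken local surgery picture rather than invoking a rigidity theorem. The key observation is that both $(T^*K)_P$ and $\mathcal{M}_G(P) = \SGS \gitat{\xi} U(1)^n$ contain a common dense open locus $K$-equivariantly diffeomorphic to $\Phi^{-1}(\Int(P)) \subset T^*K$ (via the polar decomposition $G = K \cdot \exp(i\mathfrak{k})$, which identifies $G$ with $T^*K$), and both are obtained by specified collapses along codimension-one strata indexed by the facets of $P$. I would therefore first establish the symplectomorphism on this open dense locus and then match boundary strata face-by-face against the decomposition (\ref{decomp}).

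Over the open torus orbit $\mathbb{G}_\beta \subset \Aff_\beta$, the fibered product $\SGS$ restricts to $G \times \mathbb{G}_\beta$, with $U(1)^n$ acting via the $\phi_\beta$-twisted right multiplication on the first factor and in the standard way on the second. Computing the moment map for this $U(1)^n$-action, setting it equal to $\xi$, and gauge-fixing to $z_i \in \mathbb{R}_{\geq 0}$ yields a slice that is $K \times K$-equivariantly isomorphic to $\Phi^{-1}(\Int(P))$, with $\Phi(g) \in \Int(P)$ forced by the strict positivity $|z_i|^2 > 0$. This matches the top-dimensional stratum of (\ref{decomp}).

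For the boundary, over the coordinate substratum of $\Aff_\beta$ where $z_i = 0$ precisely for $i \in I$, the fiber of $\pi_G$ degenerates to a reductive monoid whose $G \times G$-orbit structure implements the collapse by $T_I = \prod_{i \in I}U(1)_i$. A local structure theorem for $\SGS$ near such a stratum -- an instance of the Luna slice theorem for the $G \times G$-action on the spherical variety, consistent with Sjamaar's description of the moment image as $Q_{G,\beta}$ -- produces a $K \times K$-invariant local model of the form $\Phi^{-1}(V_\sigma) \times \mathbb{C}^{|I|}$ with $T_I$ acting diagonally by $\beta$ on the first factor and standardly on the second. Reducing by $T_I$ then produces precisely the local Abelian cut $\Phi^{-1}(V_\sigma)_{P_\sigma}$ of the Meinrenken construction. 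Gluing these identifications via the global section $s$ from Lemma \ref{sectionconstr} yields the desired symplectomorphism.

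The main obstacle is compatibility of symplectic forms rather than of underlying smooth structures. On the open stratum this reduces to matching moment maps via (\ref{characterization}) and Sjamaar's theorem. On the boundary strata one must track the form under degeneration of the fibers of $\pi_G$ and argue that the Luna-slice model above is not merely a diffeomorphism but a symplectomorphism for the K\"ahler structure inherited from the embedding $\iota$; here $\iota$ and the section $s$ together do the real work, guaranteeing that the reductions carry the correct Hamiltonian $K \times K$-structure on each stratum and that the pieces glue as symplectic orbifolds.
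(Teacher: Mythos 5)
Your overall architecture coincides with the paper's: identify the open stratum $\Phi^{-1}(\Int(P))$ inside $\mu^{-1}(\xi)\subset\SGS$ via the polar decomposition and the section $s$, match the boundary strata against the local Abelian cuts of the Meinrenken construction, and use the K\"ahler structure coming from $\iota$ together with the Lagrangian fibers of the Cartan projection to compare forms. But the step that carries the real weight --- the boundary analysis --- is asserted rather than carried out, and as stated it would not go through. The Luna slice theorem for the $G\times G$-action gives an \'etale (or analytic) local model for the \emph{algebraic} action near a closed orbit; it does not produce a $K\times K$-invariant \emph{symplectic} product model $\Phi^{-1}(V_\sigma)\times\mathbb{C}^{|I|}$ for the K\"ahler form restricted from $M_{N\times N}(\mathbb{C})$, and even the equivariant symplectic normal form only gives such a model up to an unspecified equivariant symplectomorphism. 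If the local models near the various strata are produced abstractly in this way, nothing guarantees that they glue with one another or with the open-stratum identification; the claimed form of the model (with $T_I$ acting through $\beta$ on the first factor) is essentially the statement to be proved near that stratum. The paper's proof avoids this by writing the local maps down explicitly, $T_\sigma\big((k,\gamma),(e^{i\theta_j}r_j)_j\big)=(k,(e^{i\theta_j}),(1,\dots,1)).\,s\big(\gamma,(r_j^2+\langle\beta_j,q(\gamma)\rangle),(\xi_j)\big)$, all in terms of the single global section $s$ of Lemma \ref{sectionconstr}; the normal form is used only to verify that these explicit maps are embeddings, and a two-stage reduction (first by $U(1)^r$, then by the remaining $n-r$ factors, using Lemma \ref{crucial} and \cite[Theorem 7]{vinberg} for the slice property) identifies the reduced local pieces with open suborbifolds of $\mathcal{M}_G(P)$. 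One must also check that the phases $e^{i\theta_j}$ act trivially when $r_j=0$ (so the maps are well defined) and that the glued map is injective and surjective; none of this appears in your outline.

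The second soft spot is the symplectic form on the boundary. You propose to ``track the form under degeneration of the fibers of $\pi_G$,'' which is both difficult and unnecessary: once $\widetilde{T}$ is known to be a global diffeomorphism of orbifolds and a symplectomorphism on the dense open stratum --- where Lemmas \ref{sympl} and \ref{sympl2} apply because one is working inside the reductive group $G\times\mathbb{G}_{\beta}\subset\SGS$ --- it is automatically a symplectomorphism everywhere, since two smooth forms agreeing on a dense open set agree. Without this density observation, your final paragraph records an obligation it does not discharge.
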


\begin{proof}

We begin by establishing local (orbifold) diffeomorphisms.  We shall follow here the notation used in Section \ref{nonabcut}.  Recall that there local descriptions of the non-abelian cut were given as abelian symplectic cuts $\Phi^{-1}(V_{\sigma})_{P_{\sigma}}$ using the local action of $A_{\sigma}$ that commuted with the action of $K$.  In turn these abelian symplectic cuts are defined as $$\Phi^{-1}(V_{\sigma})_{P_{\sigma}} =  \Big(\Phi^{-1}(V_{\sigma})  \times \mathbb{C}^r\Big) \biggitat{(\xi_j)_{j\in J}} U(1)^r,$$ where $J$ denotes the equations among all inequalities (\ref{ineqs}) needed to describe $P_{\sigma}$ -- for convenience we shall assume that $J=\{1,\dots,r\}$.  In what follows we shall need to make one small modification to this description: rather than using $V_{\sigma}$ that are neighborhoods of all of $\sigma$ in $U_{\sigma}$ satisfying (\ref{neighchar}), we shall just use neighborhoods $\widetilde{V}_{\sigma}$ of $\sigma \cap P$ in $U_{\sigma}$ that satisfy (\ref{neighchar}) and moreover have the property that the inequalities (\ref{ineqs}) are strict for all $j\notin J$, as illustrated in Figure \ref{vsigma}.  One easily sees that $\Phi^{-1}(V_{\sigma})_{P_{\sigma}}=\Phi^{-1}(\widetilde{V}_{\sigma})_{P_{\sigma}}$ if $V_{\sigma}\cap P=\widetilde{V}_{\sigma}\cap P$.

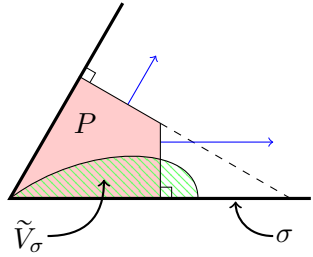
\begin{figure}[h]
\begin{center}
\begin{tikzpicture}
\fill[red!20!white] (0,0)--(2,0)--(2,1)-- ++(150:1.25) -- cycle;
\draw[dashed] (2,1) -- ++(-30:2);
\draw[very thin] (2,1) ++(150:1.25) ++(.22,0.045)+(150:.15) -- +(0,0) -- +(-120:.15);
\draw[blue,->] (2,.75) -- ++(1.5,0);
\draw[blue,->](2,1) ++(150:.5) -- ++(60:.75);
\draw (1,1) node {$P$};
\draw (3.4,-.5) node[anchor=west] {$\sigma$};
\draw[thick,->] (3.5,-.5) .. controls (3.15,-.5) and (3,-.25) .. (3,-.04);

\filldraw[pattern=north west lines, pattern color = green!60!white]
(0,0)
.. controls (1,.75) and (2.5,.75) .. (2.5,0) -- cycle;
\draw (2,0)--(2,1)-- ++(150:1.25);
\draw[very thin] (2.15,0)--(2.15,.15)--(2,.15);

\draw[very thick] (4,0)--(0,0)--(60:3);

\draw (.6,-.5) node[anchor=east] {$\widetilde{V}_{\sigma}$};

\draw[thick,->] (.5,-.5) .. controls (1,-.5) and (1.25,-.25) .. (1.25,.25);

\end{tikzpicture}
\caption{\label{vsigma}\emph{Example of $\widetilde{V}_{\sigma}$ for $K=SU(3)$.}}
\end{center}
\end{figure}

We shall now write down a map $T_{\sigma}$ from $\Phi^{-1}(\widetilde{V}_{\sigma})  \times \mathbb{C}^r$ to $\SGS$, for which we describe elements in $\mathbb{C}^r$ in polar coordinates as $(e^{i\theta_j}r_j)_{j\in J}$ -- we normalize the moment map of $U(1)^r$ on $\mathbb{C}^r$ to be $(r_j^2)_j$.  We define $$T_{\sigma}:((k,\gamma),(e^{i\theta_j}r_j)_j)\mapsto (k,\underbrace{(e^{i\theta_j})}_{j\in J},\underbrace{(1,\dots,1)}_{j\notin J}).s\big(\gamma, \underbrace{(r_j^2+\langle \beta_j, q(\gamma)\rangle)}_{j\in J},\underbrace{(\xi_j)}_{j\notin J}\big),$$ where `$.$' denotes the $\mathcal{L}$-action; see Figure \ref{fetish} for an illustration.  This is well-defined: because of the restrictions to $\widetilde{V}_{\sigma}$ the argument of $s$ indeed lies in its domain, i.e.\ $\Adj^*(K\times U(1)^n)(Q_{G,\beta})$, and secondly, there is an ambiguity in the $e^{i\theta_j}$ whenever $r_j=0$, but one checks  that the action of $e^{i\theta_j}$ on both $\mathbb{A}_{\beta}$ and (using \cite[Theorem 7]{vinberg}) on $\SGv$, and hence on the image of $T_{\sigma}$, is trivial whenever $r_j=0$.

\begin{figure}[h]
\begin{center}
\begin{tikzpicture}

\begin{scope}[scale=1.2]

\fill[pattern=north west lines, pattern color = blue!20!white](2,2)--(0,0)--(2.828,0) arc(0:45:2.828);

\draw[very thin,->] (0,0)--(3.5,0);
\draw(0,1.75) node[anchor=east]{$\Phi_{K}$};
\draw(3.25,0) node[anchor=north]{$\mu$};

\draw[very thick] (2,2) -- (0,0) -- (2.828,0);

\draw[very thin,->] (0,0)--(0,2);

\draw[very thick, red!40!white] (1,0)--(1,1);
\filldraw[red!40!white] (1,0) circle (2pt);
\filldraw[red!40!white] (1,1) circle (2pt);
\filldraw[white] (1,1) circle (1.5pt);

\draw[very thin] (-4,0)--(-4,2);
\draw[very thick, red!40!white] (-4,0)--(-4,1);
\filldraw[red!40!white] (-4,0) circle (2pt);
\filldraw[red!40!white] (-4,1) circle (2pt);
\filldraw[white] (-4,1) circle (1.5pt);

\draw[decorate, thick,decoration=brace] (-4.1,0)--(-4.1,1);

\draw(-4.3,.5) node[anchor=east]{$\widetilde{V}_{\sigma^{min}}$};
\draw(-4,1.75) node[anchor=west]{$\mathfrak{t}^*_+$};

\draw [thick,->](-3.9,0.5) .. controls (-1.9,1) and (-1.1,1) ..
node[above] {$T_{\sigma^{min}}$}
 (.9,.5);
 \draw(2.2,.7) node{$Q_{G,\beta}$};
 \draw(1,0) node[anchor=north]{$\xi$};
\end{scope}

\begin{scope}[scale=1.2,yshift=-3.5cm]
\fill[pattern=horizontal lines, pattern color = red!40!white](2,2)--(0,0)--(2.828,0) arc(0:45:2.828);

\draw[dashed] (1,0)--(1,1);
\draw[very thin,->] (0,0)--(3.5,0);
\draw(0,1.75) node[anchor=south west]{$\Phi_{K}$};
\draw(3.25,0) node[anchor=north]{$\mu$};

\draw[very thick] (2,2) -- (0,0) -- (2.828,0);

\draw[very thin,->] (0,0)--(0,2.2);

\draw[very thick, red!40!white] (0,0)--(2,2);
\filldraw[red!40!white] (0,0) circle (2pt);
\filldraw[white] (0,0) circle (1.5pt);

\draw[very thin] (-6,0)--(-6,2);

\fill[pattern=horizontal lines, pattern color = red!40!white](-6,0.05)--(-3,0.05)--(-3,2) -- (-6,2) -- cycle;

\draw(-4.1,.75) node{$\widetilde{V}_{\sigma^{max}}\times [0,\infty)$};
\draw(-6,1.75) node[anchor=east]{$\mathfrak{t}^*_+$};

\draw [thick,->](-3.9,1.2) .. controls (-1.9,1.7) and (-.3,1.7) ..
node[above] {$T_{\sigma^{max}}$}
 (1.7,1.2);
 \draw(2.2,.7) node{$Q_{G,\beta}$};
 \draw(1,0) node[anchor=north]{$\xi$};
\draw[dashed] (-6,1)--(-5.05,0.05);
\draw[very thick, red!40!white] (-6,0)--(-6,2);
\filldraw[red!40!white] (-6,0) circle (2pt);
\filldraw[white] (-6,0) circle (1.5pt);

\end{scope}

\end{tikzpicture}
\caption{\label{fetish}\emph{The maps $T_{\sigma}$ on the level of Kirwan polytopes for  $K=SU(2)$, $\mathfrak{t}^*_+\cong [0,\infty)\subset \mathbb{R}$, $P=[0,\xi ]$, $\sigma^{min}=\{ 0\}$,\ $\widetilde{V}_{\sigma^{min}}=[0,\xi )$, $\sigma^{max}=(0,\infty)$,  $\widetilde{V}_{\sigma^{max}}=(0,\infty)$.  The dashed lines in the lower picture indicate level sets of the moment maps for the $U(1)^r$ and $U(1)^n$-actions respectively.}}
\end{center}
\end{figure}
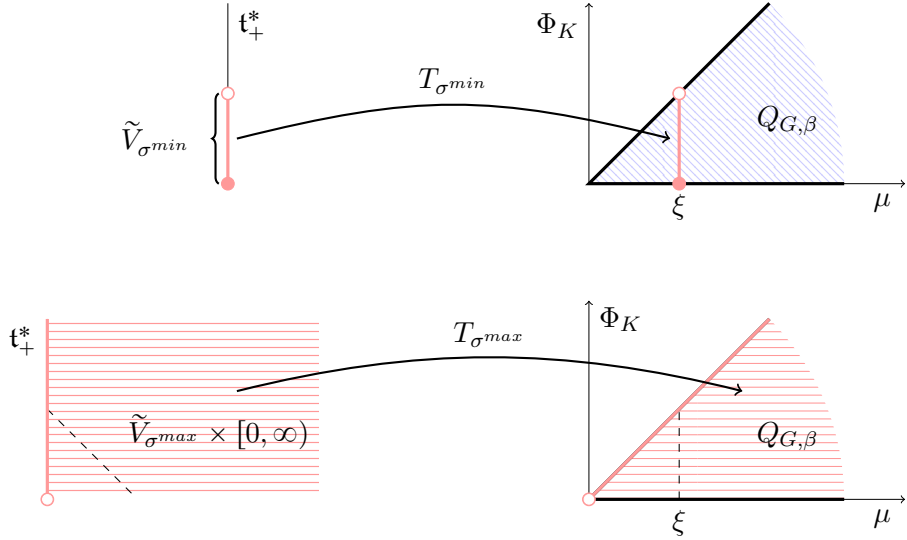

The map $T_{\sigma}$ is equivariant for the action of $K\times K$ and $U(1)^r$, where $K\times K$ acts in the obvious way on $\Phi^{-1}(\widetilde{V}_{\sigma})$ and $U(1)^r$ acts diagonally on the product, on  $\Phi^{-1}(\widetilde{V}_{\sigma})$ through $A_{\sigma}$ and on $\mathbb{C}^r$ in the obvious way. Using (\ref{characterization}) one sees that the moment map for this $U(1)^r$-action on $\Phi^{-1}(\widetilde{V}_{\sigma})  \times \mathbb{C}^r$ is given by $$((k,\gamma),(e^{i\theta_j}r_j)_j)\mapsto (r_j^2 + \langle \beta_j,q(\gamma)\rangle)_j;$$ from this and the formula for $T_{\sigma}$ it follows that
the $(\xi_1,\dots,\xi_r)$-level set of this moment map  gets sent by $T_{\sigma}$ to  $\mu^{-1}(\xi)\subset \SGS$.  
Using the equivariant normal forms (see \cite{marle,guilsternconf}) for said actions one sees that $T_{\sigma}$ is an embedding as well.

One can now think of the symplectic reduction $\SGS\gitat{\xi} U(1)^n$ as happening in two stages: first take the reduction by the first $r$ factors of $U(1)^n$, and then by the remaining $n-r$ ones.  If the image of $T_{\sigma}$ intersects an orbit for the last $n-r$ $U(1)$-factors it does so transversely and in a single point (the latter follows by using Lemma \ref{crucial} and \cite[Theorem 7]{vinberg}); as a result $T_{\sigma}\big(\Phi^{-1}(\widetilde{V}_{\sigma})  \times \mathbb{C}^r\big)\gitat{(\xi_j)_{j\in J}} U(1)^r$ provides an orbifold slice for the level set of the moment map for the last $n-r$ $U(1)$-factors acting on $\SGS\gitat{(\xi_j)_{j\in J}} U(1)^r$.  Therefore the induced map $$\widetilde{T}_{\sigma}:\Phi^{-1}(\widetilde{V}_{\sigma})_{P_{\sigma}}\rightarrow\widetilde{T}_{\sigma}\Big(\Phi^{-1}(\widetilde{V}_{\sigma})_{P_{\sigma}}\Big)\subset \SGS  \gitat{\xi} U(1)^n$$ is an orbifold diffeomorphism onto an open suborbifold of $\mathcal{M}_G(P)$.   

The various $\widetilde{T}_{\sigma}$ glue together to $\widetilde{T}$, which is still an embedding.  Indeed, if we think of $(T^*K)_P$ on a topological level as starting from $\Phi^{-1}(P)$ and collapsing the boundary following (\ref{decomp}), then as a map between topological spaces $\tilde{T}$ is induced by  the map \begin{equation*}T^{top}:\Phi^{-1}(P)\rightarrow \mu^{-1}(\xi)\subset \SGS: (k,\gamma)\mapsto k.s(\gamma, \xi),\end{equation*} from which one sees that $\widetilde{T}$ is injective since $T^{top}$ is injective and any $T_I$-orbit in its source that has to be collapsed gets sent into a $U(1)^n$-orbit.    

To see that $\widetilde{T}$ is also surjective, observe that 
every $U(1)^n$ orbit in $\mu^{-1}(\xi)\subset \SGS$ meets some $T_{\sigma}(\Phi^{-1}(\widetilde{V}_{\sigma}))$, essentially since it follows from (\ref{cone}) that $$\Phi_K\left(\mu^{-1}(\xi)\right)=P,$$ where $\Phi_K$ is $q\circ \mu_K:\SGS\rightarrow \mathfrak{t}^*_+$.
Hence $\widetilde{T}$ is a global diffeomorphism between the orbifolds. 

To show that this is a symplectomorphism, it suffices to show it for a dense open subset.  Let $\sigma^{min}$ be the minimal face of $\mathfrak{t}^*_+$ (containing the origin).  We can take as $\widetilde{V}_{\sigma^{min}}$ the whole of $\Int(P)$.
We have $$\Big(T_{\sigma^{min}}(\Phi^{-1}(\Int(P)))\times U(1)^n\Big) = \Big(\mu^{-1}(\xi)\cap (G\times \mathbb{G}_{\beta}\subset \SGS)\Big).$$  Since the symplectic form on $\SGS$ is obtained from $\iota$ and we are now working in the reductive group $G\times \mathbb{G}_{\beta}$ inside $\SGS$, Lemma \ref{sympl2} ensures that we can use Lemma \ref{sympl}, from which it follows that in the diagram
\begin{center}
\begin{tikzpicture}
\matrix (n)[matrix of math nodes,  row sep=3em, column sep=2.5em, text height=1.5ex, text depth=0.25ex]
{T_{\sigma^{min}}(\Phi^{-1}(\Int(P)))\times U(1)^n &\mu^{-1}(\xi)\subset \SGS \\ \Phi^{-1}(\Int(P)) &\\ };
\path[right hook->]
(n-1-1) edge (n-1-2);
\path[->]
(n-1-1) edge (n-2-1);
\end{tikzpicture}
\end{center}
the pull-backs of the symplectic forms on $\Phi^{-1}(\Int(P))$ and $\SGS$ coincide, which establishes that the global diffeomorphism $\widetilde{T}$ is indeed a symplecto\-morphism on $\Phi^{-1}(\Int(P))$, and hence everywhere.   This concludes the proof.
\end{proof}

Finally from this we can conclude a non-Abelian version of (\ref{globalquotient}).
\begin{corollary}\label{nonabglobalquotient}
There is a natural symplectomorphism \begin{equation}\label{vinbglobquot}M_P\cong \bigg(\Big(\left( M \times \SGv\right)\biggitat{0} K\Big) \times_{\Aff} \mathbb{A}_{\beta}\bigg)\biggitat{\xi} U(1)^n.\end{equation}
\end{corollary}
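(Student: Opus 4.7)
The plan is to chain together three results already in hand and then interchange the order of two commuting symplectic reductions. Proposition \ref{universal} provides $M_P \cong (M \times (T^*K)_P)\gitat{0} K$, Theorem \ref{mainprop} identifies $(T^*K)_P$ with $\mathcal{M}_G(P)$ equivariantly for the $\mathcal{L}$-action of $K$, and the Cox--Vinberg identification (\ref{isommodul}) realizes $\mathcal{M}_G(P)$ as $\SGS \gitat{\xi} U(1)^n$. Stringing these together yields
$$M_P\cong \Big(M\times (\SGS \gitat{\xi} U(1)^n)\Big)\gitat{0} K,$$
so the work remaining is to move the $K$-reduction past the $U(1)^n$-reduction and then to commute the $K$-reduction with the fibered product defining $\SGS$.

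For the first of these I would appeal to symplectic reduction in stages. The $\mathcal{L}$-action of $K$ and the $U(1)^n$-action of $\mathbb{G}_{\beta}$ on $\SGS$ commute by construction, both extend to commuting Hamiltonian actions on $M\times \SGS$ (with $U(1)^n$ trivial on $M$), and their moment maps depend on independent sets of variables, so
$$M_P\cong \big((M\times \SGS)\gitat{0} K\big)\gitat{\xi} U(1)^n.$$
For the second, since $\pi_G\colon \SGv\to\Aff$ is $G\times G$-invariant and $K$ acts trivially on the $\Aff_{\beta}$ factor of $\SGS=\SGv\times_{\Aff}\Aff_{\beta}$, the $K$-moment map factors through $M\times \SGv$ and
$$(M\times \SGS)\gitat{0} K \cong \big((M\times \SGv)\gitat{0} K\big)\times_{\Aff}\Aff_{\beta},$$
which combined with the previous display gives (\ref{vinbglobquot}).

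The main subtlety I foresee is largely bookkeeping: verifying that the commuting-actions hypothesis of reduction in stages holds for the specific K\"ahler structure on $\SGS$ chosen in Section \ref{cox-vinberg}, and that no semistability pathology appears when translating between the symplectic and GIT descriptions. The former is immediate from the definition of the $K\times U(1)^n$-action used throughout Section \ref{cox-vinberg}, while the latter is controlled by the smoothness of the stable locus in $\SGS$, which was already invoked in the proof of Theorem \ref{mainprop}.
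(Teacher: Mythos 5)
Your proposal is correct and follows essentially the same route as the paper: the paper's proof likewise combines Proposition \ref{universal}, Theorem \ref{mainprop}, and (\ref{isommodul}), then commutes the $K$- and $U(1)^n$-quotients (and the fibered product) using the fact that all the actions involved commute. You have simply spelled out the reduction-in-stages and fibered-product steps in more detail than the paper does.
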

\begin{proof}
It suffices to combine Theorem \ref{mainprop} with Proposition \ref{universal} and (\ref{isommodul}), and notice that since all group actions involved commute we can switch the order of the quotients.
\end{proof}

\begin{remark}
If $K=T$ is a torus, $\SGv$ is just $G=T_{\mathbb{C}}$ and $\mathbb{A}$ is a point;  (\ref{vinbglobquot}) then just reduces to (\ref{globalquotient}).  If $M$ is a semiprojective algebraic variety, we can think of $\left(M\times \SGv\right) \gitat{0} G$ as the total space of a flat degeneration of $M$ over $\mathbb{A}$; this family also appeared in \cite[\S7]{alexeevbrion}.
\end{remark}

\section{Comparison with constructions of Paradan and Weitsman}
\subsection{Paradan}\label{parad}
As stated in the introduction, Paradan \cite{paradan} defines non-Abelian symplectic cutting of $M$ as the reduction $$M_{\text{cut}}=(M\times A)\gitat{0} K,$$ where $A$ is a smooth projective variety which $G\times G$-equivariantly compactifies $G$.  Paradan constructs this by taking a finite collection of irreducible representations $V_{\lambda_i}$ of $G$, where all the highest weights $\lambda_i$ are \emph{regular} dominant weights, i.e.\ contained in the interior of $\mathfrak{t}^*_+$.  He shows that if the convex hull of the $W$-orbits of the $\lambda_i$ is a Delzant polytope then one has an embedding $$G \hookrightarrow \mathbb{P}\left( \bigoplus_i \End(V_{\lambda_i})\right),$$ and $A$, the closure of $G$ in this projective space, is a non-singular variety.  

It follows from the theory of spherical embeddings (see e.g.\ \cite[Example 29.7]{timashev} or \cite{pezzini}) that all such $A$ are toroidal spherical $G\times G$-varieties.  If $K$ is adjoint or $Sp(n)$ (or a product of these), it even suffices to take a single regular $\lambda_i$.  In this case $A$ will have a unique closed $G\times G$ orbit, i.e.\ it will be a \emph{wonderful} compactification of $G$ since it is also smooth.

Any toroidal embedding of $G$, in particular the non-singular ones used by Paradan, can be obtained from the Cox-Vinberg construction given in Section \ref{cox-vinberg}. It suffices to use as $P$ the intersection of the convex hull of the $W$-orbits of the $\lambda_i$ with $\mathfrak{t}^*_+$ -- it is easy to see this is a universal polytope.  As a result the cut construction of Paradan can be seen as a special case of the cut of Woodward.

\subsection{Weitsman}\label{weitsm}
In \cite{weitsman} Weitsman defines a symplectic cut for Hamiltonian $U(n)$-manifolds by
$$M_{\epsilon}=\Big(M\times M_{n\times n}(\mathbb{C})\Big)\biggitat{\epsilon} U(n),$$ where $\epsilon$ is a central value in $\mathfrak{u}(n)^*$.  A surgery-type description for this construction was also given in \cite[\S 3]{andrews}; we rephrase it here in the language we have used for the Woodward construction.  We shall use as $T$ the set of diagonal matrices in $U(n)$, as before we identify $\mathfrak{u}(n)$ with $\mathfrak{u}(n)^*$ by means of an invariant metric, and we use as positive Weyl chamber $$\mathfrak{t}^*_+\ =\ \left\{  i\left(\begin{array}{cccc} \lambda_1 & & & \\ & \lambda_2 & &\\ & & \ddots & \\  & & & \lambda_n \end{array}\right) , \hspace{.5cm} \text{with} \hspace{.5cm} \lambda_1\geq\dots \geq \lambda_n\right\}.$$  Define the subsets of $\mathfrak{t}^*_+$ $$P_{\epsilon, k}:= \big\{ \lambda_1\geq \dots \geq \lambda_{n-k}>\lambda_{n-k+1}=\dots=\lambda_n=\epsilon\big\}.$$  Weitsman calls $M$ \emph{cuttable at} $\epsilon$ if $\epsilon$ is a regular value of the moment map on $M\times M_{n\times n}(\mathbb{C})$, one then has (cf.\ \cite[Remark 2.9]{weitsman}) $$M_{\epsilon}=\bigcup_{k\in\{0,\dots, n\}} \Phi^{-1}(P_{\epsilon,k})\Big/\sim_k,$$  where $\sim_k$ is determined by dividing out by the $\mathcal{R}$-stabilizer of $\sqrt{\frac{1}{i}\mu_K(.)}\in M_{n\times n}(\mathbb{C})$ -- all of these are isomorphic to $U(k)$.  A major difference with the Woodward construction is that the groups one quotients out by on the boundary of the polytope $\overline{P_{\epsilon,0}}$ are non-Abelian.  In particular, $M\gitat{\epsilon}U(n)$ itself is a subspace of $M_{\epsilon}$.  It is also harder to see if $M$ is cuttable, compared to the conditions of Definition (\ref{admissible}) for the Woodward construction.

The main aim of the cut in \cite{weitsman} is to produce compact spaces out of non-compact $U(n)$-spaces with proper moment maps.  For this purpose a single cut will in general not suffice.  Rather Weitsman takes a cut, reverses the symplectic structure, takes another cut, and reverses the symplectic structure again, to obtain $\overline{\left( \overline{M_{\epsilon}} \right)_{\delta}}$, which will always be compact.

\appendix

\section{Symplectic structures on complex reductive groups}\label{sympred}

As we want to compare a symplectic construction involving $T^*K$ with an algebraic construction involving $G$, we need a way to relate the two.   This is provided by the following:

\begin{lemma}\label{sympl}
Let $G$ be a connected complex reductive group,  the complexification of a compact Lie group $K$.  Assume we have a symplectic structure $\omega_G$ on $G$ such that the two actions $\mathcal{L}$ and $\mathcal{R}$ of $K$ on $G$ are Hamiltonian with moment maps $\widetilde{\mu}^{\mathcal{L}}$ and $\widetilde{\mu}^{\mathcal{R}}$.  Assume further that we have a projection $\Pi:G\rightarrow K$ that is equivariant for $\mathcal{L}$ and $\mathcal{R}$, and whose fibers are Lagrangian for $\omega_G$.  Then the morphism $$\Psi:G\rightarrow K\times\widetilde{\mu}^{\mathcal{R}}(K)\subset T^*K :g\mapsto\big(\Pi(g),\widetilde{\mu}^{\mathcal{R}}(g)\big)$$ is a $K\times K$-equivariant symplectomorphism onto an open submanifold of $T^*K$. 
\end{lemma}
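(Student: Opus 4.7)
The plan is to show that $\Psi$ is a $K \times K$-equivariant local diffeomorphism whose pullback of the canonical symplectic form on $T^*K$ agrees with $\omega_G$. Since $\dim_{\mathbb{R}} G = 2\dim K = \dim_{\mathbb{R}} T^*K$, any such map is automatically a symplectomorphism onto an open submanifold.

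Equivariance of $\Psi$ is immediate from the hypotheses: $\Pi$ is equivariant for both $\mathcal{L}$ and $\mathcal{R}$, while $\widetilde{\mu}^{\mathcal{R}}$ is $\mathcal{R}$-equivariant (as a moment map) and $\mathcal{L}$-invariant (because $\mathcal{L}$ and $\mathcal{R}$ commute), matching the transformation law on $T^*K \cong K \times \mathfrak{k}^*$ recalled in Section \ref{prelims}. For the local diffeomorphism property, fix $g \in G$, set $F = \Pi^{-1}(\Pi(g))$, and let $V = T_g(\mathcal{R} \cdot g) \subset T_g G$. Since $\mathcal{R}$ acts freely and transitively on $K$ and $\Pi$ is $\mathcal{R}$-equivariant, the restriction $d\Pi|_V$ surjects onto $T_{\Pi(g)} K$, so $V + T_g F = T_g G$; freeness of $\mathcal{R}$ on $G$ further yields $V \cap T_g F = 0$, hence $T_g G = V \oplus T_g F$. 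For $v \in \ker d\Psi_g$, one has $v \in T_g F$ (from $d\Pi(v) = 0$), and by the moment map identity $\iota_{X^{\mathcal{R}}} \omega_G = d\langle \widetilde{\mu}^{\mathcal{R}}, X\rangle$ also $\omega_G(v, X^{\mathcal{R}}|_g) = 0$ for every $X \in \mathfrak{k}$. Since $T_g F$ is Lagrangian, $(T_g F)^{\omega_G} = T_g F$, so $v \in (T_g F)^{\omega_G} \cap V^{\omega_G} = (T_g F + V)^{\omega_G} = 0$.

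It remains to prove $\Psi^*\omega_{T^*K} = \omega_G$; the idea is that $\omega_G$ is pointwise reconstructible from the pair $(\Pi, \widetilde{\mu}^{\mathcal{R}})$ alone. Writing any $u_1, u_2 \in T_g G$ uniquely as $u_i = X_i^{\mathcal{R}}|_g + l_i$ with $l_i \in T_g F$ under the direct-sum decomposition above, and combining (i) the Lagrangian condition $\omega_G(l_1, l_2) = 0$, (ii) the moment map identity $\omega_G(X^{\mathcal{R}}|_g, w) = \langle d\widetilde{\mu}^{\mathcal{R}}_g(w), X\rangle$, and (iii) equivariance $\omega_G(X_1^{\mathcal{R}}|_g, X_2^{\mathcal{R}}|_g) = \langle \widetilde{\mu}^{\mathcal{R}}(g), [X_1, X_2]\rangle$ yields
\[
\omega_G(u_1, u_2) = \langle \widetilde{\mu}^{\mathcal{R}}(g), [X_1, X_2]\rangle + \langle d\widetilde{\mu}^{\mathcal{R}}_g(l_2), X_1\rangle - \langle d\widetilde{\mu}^{\mathcal{R}}_g(l_1), X_2\rangle.
\]
The identical formula -- with $\Pi$ replaced by the projection $\pi: T^*K \to K$, whose fibers are Lagrangian, and $\widetilde{\mu}^{\mathcal{R}}$ replaced by the canonical moment map $(k,\lambda) \mapsto \lambda$ -- describes the canonical form $\omega_{T^*K}$. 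Because $\pi \circ \Psi = \Pi$ and $\mu^{\mathcal{R}}_{T^*K} \circ \Psi = \widetilde{\mu}^{\mathcal{R}}$ by construction of $\Psi$, pulling back the $T^*K$ formula along $\Psi$ reproduces the $G$-formula, establishing $\Psi^*\omega_{T^*K} = \omega_G$.

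The main obstacle I foresee is confirming that the decomposition $T_g G = V \oplus T_g F$ holds everywhere and really does force the pointwise reconstruction; once that structural input is in place, the rest is a routine identity check via the moment map and equivariance axioms, and openness of the image is automatic from the local diffeomorphism property.
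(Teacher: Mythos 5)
Your computation that $\Psi^*\omega_{T^*K}=\omega_G$ is essentially the paper's own argument: decompose each tangent vector into a part tangent to the $\mathcal{R}$-orbit and a part tangent to the fiber of $\Pi$, kill the fiber--fiber term by the Lagrangian hypothesis, and identify the remaining terms via the moment map identity and the intertwining relations $\pi\circ\Psi=\Pi$, $\mu^{\mathcal{R}}\circ\Psi=\widetilde{\mu}^{\mathcal{R}}$. That part, and your direct verification that $d\Psi_g$ is injective, are fine.

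The gap is in your opening claim that a local diffeomorphism pulling back the symplectic form is ``automatically a symplectomorphism onto an open submanifold.'' It is not: a local symplectomorphism between equidimensional manifolds need not be injective (think of $\mathbb{C}\to\mathbb{C}^*$, $z\mapsto e^z$, or of a covering of an annulus), and the lemma asserts a genuine diffeomorphism onto an open subset of $T^*K$. You must prove $\Psi$ is injective, and nothing in your argument does so. The paper gets injectivity from a global structural fact: since both actions of $K$ on $G$ are free and $\widetilde{\mu}^{\mathcal{R}}$ is $\mathcal{L}$-invariant with surjective differential, each fiber of $\widetilde{\mu}^{\mathcal{R}}$ is exactly one $\mathcal{L}$-orbit. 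Granting that, if $\Psi(g)=\Psi(g')$ then $g'=\mathcal{L}_k(g)$ for some $k\in K$, and $\Pi(g)=\Pi(g')=k\,\Pi(g)$ forces $k=e$, hence $g=g'$. You should either reproduce this identification of moment-map fibers with $\mathcal{L}$-orbits (the dimension count shows each orbit is open and closed in its fiber, so one also needs the fibers to be connected, or some equivalent global input) or find another route to injectivity; without it the stated conclusion does not follow.
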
  In particular, if $\widetilde{\mu}^{\mathcal{R}}$ is surjective, we have a symplectomorphism with all of $T^*K$.  As $G/K$ is contractible, this image always contracts onto $K$.  Notice that the use of $\widetilde{\mu}^{\mathcal{R}}$ in the definition of $\Psi$ is a consequence of the choice we made to identify $T^*K$ with $K\times \mathfrak{k}^*$ by means of left-invariant vector fields.
\begin{proof}
Observe first that
since the two actions commute, $\widetilde{\mu}^{\mathcal{L}}$ is invariant for the $\mathcal{R}$-action and vice versa.   Since both actions are free and the orbits have half the dimension, the differentials of $\widetilde{\mu}^{\mathcal{L}}$ and $\widetilde{\mu}^{\mathcal{R}}$ are surjective, hence their images are open, and the fibers of $\widetilde{\mu}^{\mathcal{R}}$ are the orbits of the $\mathcal{L}$-action and vice versa.  From this one checks straightforwardly that $\Psi$ is a $K\times K$-equivariant diffeomorphism that intertwines the moment maps.  

To see that $\Psi$ is a symplectomorphism, observe that from the properties of $\Pi$ it follows at once that the fibers of $\Pi$ are transversal to the orbits (for both actions).  At any point $g$ of $G$ we can therefore write any tangent vector $X$ as a sum of a `vertical' part $X_v$, which is tangent to a fiber of $\Pi$, and a `horizontal' part $X_h$, tangent to the orbit of the $\mathcal{R}$-action.  Using the fact that the fibers of $\Pi$ are Lagrangian we have $$\omega_G(X_h+X_v,Y_h+Y_v)=\omega_G(X_h, Y_h+Y_v)-\omega_G(Y_h,X_v),$$ and likewise for the pull-back of the symplectic form on $T^*K$.  Since both $X_h$ and $Y_h$ are evaluations of vector fields generated by $\mathcal{R}$ at $g$, the contractions of the symplectic forms by them are determined by the moment maps $\widetilde{\mu}^{\mathcal{R}}$ and $\mu^{\mathcal{R}}$.  Since $\Psi$ intertwines these moment maps, the evaluations of both forms are identical, and the result follows.
\end{proof}

The Cartan decomposition (see e.g \cite[Theorem 6.31]{knapp}) says that every element $g$ in $G$ can uniquely be written as $g=ke^{i\lambda}$, with $k\in K$ and $\lambda \in \mathfrak{k}$ -- this gives in fact a diffeomorphism between $K\times \mathfrak{k}$ and $G$.  As a consequence we obtain a canonical projection $$\Pi_{\text{cd}}:G\to K:g\mapsto k$$ which is equivariant for $\mathcal{L}$ and $\mathcal{R}$.  It turns out that if we equip $G$ with the K\"ahler form obtained from any faithful representation by restricting the Euclidean K\"ahler form on $GL(N,\mathbb{C})\subset M_{N\times N}(\mathbb{C})$, we can use this $\Pi_{\text{cd}}$ to apply Lemma \ref{sympl}.  Indeed, we have

\begin{lemma}\label{sympl2}
The fibers of $\Pi_{\text{cd}}$ are Lagrangian for the K\"ahler form on $G$ inherited from $M_{N\times N}(\mathbb{C})$.
\end{lemma}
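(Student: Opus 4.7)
The plan is to reduce, by $K$-invariance of the metric, to showing that a single fiber of $\Pi_{\text{cd}}$ is Lagrangian, and then to exhibit an antiholomorphic isometric involution of $G$ whose fixed set contains that fiber. The fiber of $\Pi_{\text{cd}}$ over $k\in K$ is the left coset $k\cdot L$, where $L:=\exp(i\mathfrak{k})\subset G$ is the fiber over the identity. Averaging the Hermitian inner product on $\mathbb{C}^N$ over $K$, I may assume the faithful representation sends $K$ into $U(N)$; then the Euclidean K\"ahler form on $M_{N\times N}(\mathbb{C})$, and hence its restriction to $G$, is invariant under left multiplication by $K$. Thus the problem reduces to showing that $L$ itself is Lagrangian.

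For this, I would introduce the conjugate-transpose involution $\phi\colon A\mapsto A^*$ of $M_{N\times N}(\mathbb{C})$. In standard complex coordinates $\phi^*z_{ij}=\bar z_{ji}$, so $\phi$ is antiholomorphic; a routine calculation with $\operatorname{Re}\operatorname{tr}(X^*Y)$ shows $\phi$ is a Euclidean isometry. These two facts together yield $\phi^*\omega_G=-\omega_G$. Moreover, for $\lambda\in\mathfrak{k}\subset\mathfrak{u}(N)$ one has $(i\lambda)^*=i\lambda$, so $(e^{i\lambda})^*=e^{i\lambda}$ and $\phi$ fixes $L$ pointwise. Since $d\phi|_{T_pL}=\mathrm{id}$ for every $p\in L$, the identity $\omega_G|_{T_pL}=\phi^*\omega_G|_{T_pL}=-\omega_G|_{T_pL}$ forces $\omega_G|_L=0$. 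As $\dim_{\mathbb{R}}L=\dim_{\mathbb{R}}\mathfrak{k}=\dim_{\mathbb{C}}G=\tfrac12\dim_{\mathbb{R}}G$, isotropy implies Lagrangian, and by $\mathcal{L}_K$-invariance every fiber of $\Pi_{\text{cd}}$ inherits this property.

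The step requiring the most care will be verifying that $\phi$ actually stabilizes $G$ (and not merely the ambient $GL(N,\mathbb{C})$ or matrix algebra). The cleanest way to handle this is to note that $X\mapsto X^*$ acts as $-\mathrm{id}$ on $\mathfrak{k}\subset\mathfrak{u}(N)$ and as $+\mathrm{id}$ on $i\mathfrak{k}$, hence preserves the real Lie subalgebra $\mathfrak{g}=\mathfrak{k}\oplus i\mathfrak{k}\subset\mathfrak{gl}(N,\mathbb{C})$; combining this with the anti-homomorphism property of $\phi$ and the Cartan decomposition $G=K\cdot\exp(i\mathfrak{k})$ then yields $\phi(G)=G$, completing the argument.
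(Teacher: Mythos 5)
Your proof is correct, but it takes a genuinely different route from the paper's. The paper argues by direct infinitesimal computation: it writes tangent vectors to the fiber through $ke^{i\lambda}$ as $\frac{d}{dt}\big|_0\, ke^{i(\lambda+t\nu)}$ with $\nu$ anti-Hermitian, expands $\omega_E(A,B)=-\ima\,\Tra(AB^*)$, cancels $k$ against $k^*$, and observes that $\Tra\left(e^{i\alpha}e^{i\beta}\right)$ is real for anti-Hermitian $\alpha,\beta$, so the imaginary part --- and hence the form --- vanishes on the fiber. You package the same underlying fact, namely that the Cartan factor $e^{i\lambda}$ is Hermitian, into a global symmetry argument: $A\mapsto A^*$ is an antiholomorphic isometry of $M_{N\times N}(\mathbb{C})$ preserving $G$, hence antisymplectic on $G$, and it fixes the identity fiber $\exp(i\mathfrak{k})$ pointwise, so that fiber is isotropic; half-dimensionality and left $K$-invariance then give the Lagrangian condition for every fiber. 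Your route buys conceptual clarity (the fibers are cut out by the real locus of an antiholomorphic involution, a standard source of Lagrangians) and avoids differentiating the matrix exponential; the price is the extra bookkeeping you correctly flag --- that $\phi$ stabilizes $G$ and that $\mathcal{L}_K$ acts by K\"ahler isometries --- both of which hold once the representation is taken unitary on $K$, an assumption the paper also makes implicitly when it cancels $kk^*$ in its own computation. Both arguments are complete.
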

\begin{proof}
This follows from a simple direct computation: the K\"ahler metric on \linebreak $M_{N\times N}(\mathbb{C})$ can be written as \begin{equation}\label{metric}g_E(A,B)=\Tra (A\ B^*),\end{equation} and hence the K\"ahler form can be written as $$\omega_E(A,B)=\rea \left(\Tra\left( A\ (iB)^*\right) \right) = -\ima \left(\Tra\left( A\ B^*\right) \right).$$  
Tangent vectors to a fiber of $\Pi_{\text{cd}}$ at a point $ke^{i\lambda}$ can be written as $\frac{d}{dt}\big|_0 ke^{i(\lambda+t\nu)}$, with $\mu$ and $\nu$ elements of $\mathfrak{u}(N)$, i.e.\ anti-Hermitian matrices.  
We therefore need to evaluate \begin{equation}\begin{split}\label{nul}
\omega_E\left(\frac{d}{dt}\Big|_0  ke^{i(\lambda+t\nu)}  ,\frac{d}{ds}\Big|_0  ke^{i(\lambda+s\xi)}\right)
&=-\frac{\partial^2}{\partial t \partial s}\Big|_{(0,0)} \ima\Tra\left( ke^{i(\lambda+t\nu)}\left(ke^{i(\lambda+s\xi)}\right)^*\right) \\ 
&= -\frac{\partial^2}{\partial t \partial s}\Big|_{(0,0)} \ima\Tra\left( e^{i(\lambda+t\nu)}\ e^{i(\lambda+s\xi)}\right).
\end{split}\end{equation}
But for any anti-Hermitian $\alpha$ and $\beta$ one has that 
\begin{equation*}
\overline{\Tra \left( e^{i\alpha}\ e^{i\beta}\right)}
= \Tra \left( e^{i\alpha^t}\ e^{i\beta^t}\right)= \Tra \left(e^{i\alpha}e^{i\beta}\right).
\end{equation*}
Hence any such $\Tra\left(e^{i\alpha}\ e^{i\beta}\right)$ is always real, and therefore (\ref{nul}) vanishes.
\end{proof}

\begin{remark}

There are at least two obvious ways one can equip $G$ with a K\"ahler structure: besides the one obtained from restricting the ambient Euclidean K\"ahler metric through an affine embedding (which we use in this paper), one can choose an invariant metric on $\mathfrak{k}$ and use this to identify $\mathfrak{k}$ and $\mathfrak{k}^*$.  Using the Cartan decomposition this gives $K\times K$-equivariant diffeomorphisms $$G\cong K\times \mathfrak{k}\cong K\times \mathfrak{k}^*\cong T^*K.$$  
The canonical symplectic structure on $T^*K$ and the complex structure on $G$ combine to a K\"ahler structure, which is discussed in \cite{hall}.   Both of these methods depend on a choice, but  they are mutually exclusive.
\end{remark}

\section{Polar decompositions and moment maps for normal reductive monoids}\label{polar}
There is a related area where we shall need the polar decomposition.  Recall that any matrix $A$ in $M_{N\times N}(\mathbb{C})$ can be written as $$A=UP,\hspace{1cm}\text{with }\hspace{1cm} U\in U(N)\hspace{1cm}\text{and }\hspace{1cm}P=\sqrt{ {A}^* A}.$$  In this decomposition $P$ is of course unique, but if $A$ is not invertible $U$ is not (for invertible complex matrices the Cartan decomposition and the polar decomposition coincide).  Closely related to this is the fact that the moment map for the $\mathcal{R}$-action of $U(N)$ on $M_{N\times N}(\mathbb{C})$ (equipped with the Euclidean K\"ahler metric) is given by 
\begin{equation}\label{momentpolar}\mu(A)=i {A}^*A,\end{equation} where we identify $\mathfrak{u}(N)$ with $\mathfrak{u}^*(N)$ by means of $\langle A,B\rangle= -\Tra(AB)$, compatible with the metric (\ref{metric}) we have chosen on $M_{N\times N}(\mathbb{C})$.  
The polar decomposition allows us to write down a section (i.e.\ a right inverse) $s$ for $\mu$: if $B$ is in the image of $\mu$ (i.e.\ if $-iB$ is positive semidefinite), simply put  \begin{equation}\label{thesection}s(B)=\sqrt{-iB}.\end{equation}   The entire pre-image of $B$ under $\mu$ is then just the $\mathcal{L}$-orbit $U(N)s(B)$.  The section $s$ is continuous; moreover, it is smooth on $\mu(GL(n,\mathbb{C}))$.

These two items -- a section of the moment map and a description of the fibers of $\mu$ as $\mathcal{L}$-orbits -- are inherited by suitable submonoids of $M_{N\times N}(\mathbb{C})$.  Indeed, we have the following:

 \begin{lemma}\label{shuffle} Let $S$ be a normal submonoid of $M_{N\times N}(\mathbb{C})$ (with inclusion denoted by $\iota$)
  given as the closure of a reductive subgroup $H=L_{\mathbb{C}}$ of $GL(N,\mathbb{C})$, with $L=H\cap U(N)$ and moment map $\mu_{L}:S\rightarrow \mathfrak{l}^*$ for the $\mathcal{R}$-action of $L$ on $S$.
 The map \begin{equation}\label{allerlaatste}(d\iota)^*: (\mu\circ \iota)(S)\longrightarrow \mu_L(S)\end{equation} is a homeomorphism, moreover it is a diffeomorphism between $(\mu\circ \iota)(H)$ and $\mu_L(H)$.
 \end{lemma}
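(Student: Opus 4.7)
The plan is to factor both $\mu\circ\iota$ and $\mu_L$ through the positive part of the polar decomposition. Set $P(A) := \sqrt{A^*A}$ for $A \in S$, so that $\mu(\iota(A)) = iA^*A = iP(A)^2$. By standard functoriality of moment maps under the inclusion $L \hookrightarrow U(N)$ one has $\mu_L = (d\iota)^*\circ\mu\circ\iota$, so the same factorization applies to $\mu_L$. Surjectivity and continuity of $(d\iota)^*:(\mu\circ\iota)(S) \to \mu_L(S)$ are then automatic, and the task reduces to injectivity plus continuity of the inverse.

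First I would verify that $P(A) \in \iota(S)$. For $A \in H$ this follows from the Cartan decomposition $H = L \cdot \exp(i\mathfrak{l})$: writing $A = \ell\exp(iX)$ gives $A^*A = \exp(2iX)$, whose positive square root $\exp(iX)$ is again in $H$. For $A \in S\setminus H$, continuity of the positive square root on positive semidefinite matrices together with closedness of $\iota(S)$ yields $P(A) \in \iota(S)$; normality of $S$ is what prevents boundary pathologies. Then comes the core claim: $\mu_L$ is injective on the positive part $P(S) \supset \exp(i\mathfrak{l})$. By $L$-equivariance under conjugation this reduces to injectivity on $\exp(i\mathfrak{t}_{\mathfrak{l}})$ for a Cartan subalgebra $\mathfrak{t}_{\mathfrak{l}} \subset \mathfrak{l}$, where $\mu_L(\exp(iX))(Y) = -i\Tra(\exp(2iX)Y)$ can be computed explicitly. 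The differential of $X \mapsto \mu_L(\exp(iX))|_{\mathfrak{t}_{\mathfrak{l}}}$ at each $X$ is the non-degenerate trace form, and global injectivity follows by a Kostant-type convexity argument (or equivalently the strict log-convexity of $X \mapsto \Tra(\exp(2iX))$ on $\mathfrak{t}_{\mathfrak{l}}$). Extension from $\exp(i\mathfrak{l})$ to the full $P(S)$ is by density plus properness.

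Combining these, if $A,B \in S$ satisfy $\mu_L(A) = \mu_L(B)$, the factorization gives $\mu_L(P(A)) = \mu_L(P(B))$, whence $P(A) = P(B)$, and $\mu(\iota(A)) = iP(A)^2 = iP(B)^2 = \mu(\iota(B))$; this establishes injectivity of $(d\iota)^*$. The inverse $\beta \mapsto i(\mu_L|_{P(S)})^{-1}(\beta)^2$ is continuous, upgrading this bijection to the claimed homeomorphism. On $H$, the smooth Cartan parametrization $H \cong L \times \mathfrak{l}$ makes all maps smooth, yielding the diffeomorphism statement. The hardest part will be the injectivity of $\mu_L$ on $P(S)$ near the boundary where elements may fail to be invertible — the Cartan decomposition degenerates there, and one must rely on normality of $S$ plus a careful limit argument to conclude.
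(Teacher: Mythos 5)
Your reduction of the lemma to the single claim that $\mu_L$ is injective on the set $P(S)$ of positive-semidefinite elements of $S$ is sound (the identities $\mu(\iota(A))=iP(A)^2$ and $\mu_L(A)=\mu_L(P(A))$ do hold, and $P(A)\in S$ follows from continuity of the square root and closedness of $S$ --- normality plays no role in that step). The problem is that you have deferred precisely the step that carries the content of the lemma. ``Density plus properness'' does not yield injectivity on a closure: injectivity is not a closed condition, and a continuous proper map that is injective on a dense open subset can perfectly well identify points of the boundary (this is exactly the phenomenon that occurs for non-normal monoids, which is why the normality hypothesis is there). Your convexity computation on $\exp(i\mathfrak{t}_{\mathfrak{l}})$ only re-proves injectivity over the open part $H$, where the paper already gets a diffeomorphism for free from the Cartan decomposition and Lemma \ref{sympl}; it says nothing about $S\setminus H$, where the Cartan decomposition degenerates and the explicit parametrization is lost. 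Saying ``one must rely on normality of $S$ plus a careful limit argument'' names the difficulty without resolving it.

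What actually closes this gap in the paper is a two-part structural argument that your proposal does not contain. First, normality makes $S$ a spherical $H\times H$-variety, hence a multiplicity-free $L\times L$-Hamiltonian space, which forces every fiber of $\mu_L$ to lie in a single $L\times L$-orbit; this converts the injectivity question into the statement that whenever $g\in L$ stabilizes $\mu_L(b)$ under $\Adj^*$, it also stabilizes $\mu(\iota(b))$ in $\mathfrak{u}(N)^*$. Second, to transfer this from $H$ (where it is known) to a boundary point $b$, one needs to approximate $b$ by elements of $H$ \emph{without changing the relevant stabilizer}; the paper achieves this by translating $\Phi^{\mathcal{R}}_L(b)$ into the interior of the weight cone $\mathcal{K}$ by elements of $\mathfrak{z}(L)^*$ (which act trivially on stabilizers), using the face--orbit correspondence for $S$ to conclude that interior points of $\mathcal{K}$ only come from $H$. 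If you want to salvage your polar-decomposition route, you would need an argument of comparable strength showing that the continuous extension of $X\mapsto (d\iota)^*(ie^{2iX})$ to the boundary of $P(S)$ cannot fold --- and any such argument will have to use normality through the orbit or weight-cone structure, not merely through a limit of the interior computation. Separately, your reduction from $\exp(i\mathfrak{l})$ to $\exp(i\mathfrak{t}_{\mathfrak{l}})$ needs the Weyl group handled (conjugate elements land in $\mathfrak{t}_{\mathfrak{l}}^*$ only up to $W_L$), and the properness of $\mu_L$ that you invoke for continuity of the inverse should be justified (the paper cites Sjamaar); these are minor by comparison.
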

  We shall denote the inverse of this homeomorphism as $\eta$; it is again smooth on the relative interior of its domain.  Observe that we always have  \begin{equation}\label{triptic}(d\iota)^*\circ \mu\circ \iota = \mu_L\end{equation} from functoriality of the moment map.
 
 \begin{proof}
 We begin by restricting to $H\subset S$.  By using Lemma \ref{sympl} (valid because of Lemma \ref{sympl2}) and the Cartan decomposition, we get a diffeomorphism $$\chi:\mu_L(H)\overset{\cong}{\longrightarrow}\mathfrak{l}.$$  Using (\ref{momentpolar}) and the fact that the Cartan decomposition is functorial with respect to $\iota$, $$\mu_L(H) \rightarrow \mu(\iota(H))\subset \mathfrak{u}(N)^*: x\mapsto i e^{i 2(d\iota\ \circ\ \chi)(x)}$$ is an inverse to the restriction of (\ref{allerlaatste}) to $H$ and is a diffeomorphism onto its image -- by using the metric on $\mathfrak{u}(N)^*\cong \mathfrak{u}(N)$ one can think of this image as the graph of a smooth function from $\mu_L(H)$ to the orthogonal complement of $\mathfrak{l}\subset \mathfrak{u}^N$.  
 
By \cite[Lemma 4.10]{sjamaar} both $\mu$ and $\mu_L$ are proper, hence it follows from (\ref{triptic}) that (\ref{allerlaatste}) is proper as well.  Therefore the rest of the statement follows provided we can establish that (\ref{allerlaatste}) is one-to-one, since a proper continuous bijection between locally compact Hausdorff spaces is always a homeomorphism.
 
Since $S$ is normal it is a spherical $H\times H$-variety (where we use both $\mathcal{L}$- and $\mathcal{R}$-actions); it follows that symplectically it is a multiplicity-free space for the action of $L \times L$ (see e.g.\ \cite[\S 5.1]{brionmoment} or the discussion in \cite[\S 2]{knop}).  As a result any fiber of $\Phi_{L\times L}$ (the moment map of the $L\times L$ action composed with the projection to the positive Weyl chamber) consists of a single $L\times L$-orbit.  Since $\Phi_{L\times L}=(\Phi^{\mathcal{L}}_L,\Phi^{\mathcal{R}}_L)$ and  $\Phi^{\mathcal{L}}_L=\tau\circ\Phi^{\mathcal{R}}_L$ (where $\tau$ is the involution of the positive Weyl chamber) this implies that any fiber of $\mu_L$ is contained in such an $L\times L$-orbit.  On the other hand, since the $\mathcal{L}$- and $\mathcal{R}$-actions of $L$ on $S$ commute we know that any fiber of the moment map for one is a union of orbits for the other.  Hence for the fibers of $\mu_L$ we can focus our attention on the $\mathcal{R}$-action of $L$.

Let $b$ be an element of $S\setminus H$.  For any $g\in L$,  $\mu_L(bg^{-1})=\mu_L(b)$ if and only if $\Adj^*_g(\mu_L(b))=\mu_L(b)$.  The only way that (\ref{allerlaatste}) could fail to be one-to-one is if there existed such a choice of $b$ and $g$ with $\Adj^*_{\iota(g)}(\mu(\iota(b)))\neq \mu(\iota(b))$.
 Since we know this cannot happen for elements in $H$, it suffices by continuity to show that we can approach $b$ by elements in $H$ whose images under $\mu_L$ are still stabilized by the same $g$.  
 
 In order to establish this we need some results about $\Phi^{\mathcal{R}}_L(S)$ (or, equivalently, $\Phi_{L\times L}(S)$).  By \cite[Theorem 4.9]{sjamaar} we know that $\Phi^{\mathcal{R}}_L(S)$ is the cone generated by the highest weights for the representation of $H$ on the ring of regular functions of $S$.  In turn the latter is described by \cite[Corollary 27.17]{timashev} (see also \cite[Theorem 2]{vinberg}) -- it is the intersection of the positive Weyl chamber of $L$ with the convex cone $\mathcal{K}$ generated by all the weights of the representation of the maximal torus of $H$ on $\mathbb{C}^N$ induced by $\iota$.  Moreover this gives us a description of the $H\times H$ orbits of $S$: by \cite[Theorem 27.20]{timashev} they are in bijection with the faces of $\mathcal{K}$ whose interiors intersect the positive Weyl chamber of $L$.  In particular, this implies that if $s\in S$ is such that $\Phi^{\mathcal{R}}_L(s)$ is in the interior of $\mathcal{K}$, then $s\in H$.
 
 Now, since $\mathcal{K}$ is invariant under the action of the Weyl group of $L$, it contains its own projection onto $\mathfrak{z}(L)^*$, the dual of the Lie algebra of the center of $L$.  This projection is a cone in $\mathfrak{z}(L)^*$, and since $\Phi^{\mathcal{R}}_L(S)$ has to generate all of the dual of the Cartan of $L$ (see \cite[Theorem 2]{vinberg}), it has to contain elements in the interior of $\mathcal{K}$.  Therefore we can approximate any  $\Phi^{\mathcal{R}}_L(b)$ by elements in the interior of $\mathcal{K}$ by adding elements of $\mathfrak{z}(L)^*$, i.e.\ without changing the stabilizer of $\mu_L(b)$ under the coadjoint representation. Since $S$ is the closure of $H$ this means that we can indeed approximate $b$ by elements in $H$ with the stabilizer condition as stipulated above, which concludes the proof.\end{proof}
 
We can put this together with the section $s$ of $\mu$ as 
 \begin{center}
\begin{tikzpicture}
\matrix (n)[matrix of math nodes,  row sep=3em, column sep=2.5em, text height=1.5ex, text depth=0.25ex]
{S & & M_{N\times N}(\mathbb{C}) \\ & & \\ \mu_L(S) 
& & \mu(M_{N\times N}(\mathbb{C}))
\\ \mathfrak{l}^* & &  
\mathfrak{u}(N)^*.\\};
\path[right hook->] 
(n-1-1) edge node[auto] {$\iota$} (n-1-3);
\path[->]
(n-1-1)
 edge  node[auto] {$\mu_L$} 
(n-3-1);
\path[->] 
(n-3-1) edge [bend left=15]  node[auto] {$\eta$} (n-3-3);
\path[->] 
(n-4-3) edge node[auto] {$(d\iota)^*$} (n-4-1);
\path[->] 
(n-1-3) edge  node[auto] {$ \mu $} (n-3-3);
\path[->] 
(n-3-1) edge [bend left=30] node[auto] {$s_L$}(n-1-1);
\path[right hook->] 
(n-3-1) edge (n-4-1);
\path[right hook->] 
(n-3-3) edge (n-4-3);
\path[->] 
(n-3-3) edge [bend left=30] node[auto] {$ s $}(n-1-3);
\end{tikzpicture}
\end{center}

\begin{lemma}\label{sectionconstr}
The composition $s_L:=s\circ \eta$ takes values in $S$, and can therefore be understood as a section of $\mu_L$. 
\end{lemma}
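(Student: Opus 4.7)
The plan is to verify that $s(\eta(x))$, which a priori lands only in $M_{N\times N}(\mathbb{C})$, in fact lies in $\iota(S)$ for every $x \in \mu_L(S)$. Once this is established, the fact that the resulting $s_L$ is a section of $\mu_L$ is immediate from (\ref{triptic}).

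First I unwind what $s \circ \eta$ computes. Given $x \in \mu_L(S)$, pick any $b \in S$ with $\mu_L(b) = x$; by (\ref{triptic}) we then have $\eta(x) = \mu(\iota(b))$ (so in particular this value is independent of the choice of $b$). Combining (\ref{thesection}) with (\ref{momentpolar}),
$$s(\eta(x)) \;=\; \sqrt{-i\cdot i\,\iota(b)^*\iota(b)} \;=\; \sqrt{\iota(b)^*\iota(b)},$$
which is precisely the Hermitian positive-semidefinite factor $P$ in the polar decomposition of $\iota(b)$.

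Next I check $P \in \iota(S)$ in two steps. For $b \in H = L_{\mathbb{C}}$, apply the Cartan decomposition inside $H$ to write $\iota(b) = l \exp(i\lambda)$ with $l \in L$ and $\lambda \in \mathfrak{l}$; since $l^* = l^{-1}$ and $\exp(i\lambda)$ is Hermitian, one gets $\iota(b)^*\iota(b) = \exp(2i\lambda)$, whence $P = \exp(i\lambda) \in \iota(H) \subseteq \iota(S)$. For general $b \in S$, observe that $H$ is Zariski-open and Zariski-dense in $S$ (as the intersection of $S$ with the invertible locus, with $S$ being the Zariski closure of $H$ by hypothesis), hence analytically dense in $\iota(S)$. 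Choose $b_n \in H$ with $\iota(b_n) \to \iota(b)$; the continuity of $A \mapsto \sqrt{A^*A}$ on the cone of positive-semidefinite matrices yields $P_n \to P$ with each $P_n \in \iota(S)$, and $\iota(S)$ is analytically closed (being a Zariski-closed subvariety of $M_{N\times N}(\mathbb{C})$), so $P \in \iota(S)$.

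Finally, define $s_L(x) := \iota^{-1}(s(\eta(x))) \in S$. Applying $(d\iota)^*$ to the identity $\mu(s(\eta(x))) = \eta(x)$ (which holds because $s$ is a section of $\mu$) and using (\ref{triptic}) gives $\mu_L(s_L(x)) = (d\iota)^*(\eta(x)) = x$, confirming the section property. The one subtle point is the non-invertible case $b \in S \setminus H$: the unitary factor in the polar decomposition of $\iota(b)$ fails to be unique there, but the Hermitian positive-semidefinite factor is always unique and varies continuously with the matrix, and it is precisely this continuity, together with the closedness of $\iota(S)$, that lets the calculation on $H$ propagate to all of $S$.
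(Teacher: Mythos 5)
Your proof is correct and follows essentially the same route as the paper's: identify $s(\eta(x))$ with the Hermitian factor of the polar decomposition of $\iota(b)$, use the Cartan decomposition inside $H$ to see that this factor lies in $\iota(H)$ when $b$ is invertible, and then pass to all of $S$ by density of $H$, continuity of the positive part of the polar decomposition, and closedness of $\iota(S)$. The only cosmetic difference is that you run the continuity argument at the level of elements of $S$ rather than at the level of $\mu_L(H)\subset\mu_L(S)$, which changes nothing of substance.
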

\begin{proof}
By continuity it suffices again to check this on $\mu_L(H)$.  The section $s$ is characterized by the fact that the polar decomposition of any point in its image can be chosen to have trivial unitary part.  Now, if an orbit of the $\mathcal{L}$-action of $U(N)$ on $GL(N,\mathbb{C})$ meets $H$ (say in an element $h$), then the unique element in this $\mathcal{L}$-orbit whose polar decomposition has trivial unitary part has to be contained in $H$ itself.  Indeed, use the Cartan decomposition for $H$ to write $h=le^{i\lambda}$.  Then clearly also $e^{i\lambda}$ is contained in both the $\mathcal{L}$-orbit of $U(N)$ and in $H$.  As the Cartan decomposition is preserved by $\iota$, this element will have trivial unitary part for the Cartan decomposition of $GL(N,\mathbb{C})$, as well as for the polar decomposition (since the Cartan decomposition and the polar decomposition coincide for invertible matrices).  Hence $s_L$ takes values in $S$, and since $\eta$ is a section of $(d\iota)^*$ and $s$ is a section of $\mu$ it follows from (\ref{triptic}) that $s_L$ is a section of $\mu_L$.
\end{proof}

This section $s_L$ is again continuous and smooth on the relative interior of its domain.

\begin{lemma}\label{crucial}
Let $S$ be a normal closed submonoid of $M_{N\times N}(\mathbb{C})$ as in Lemma \ref{shuffle}.  
Then the fibers of the moment map $\mu_L$ for the $\mathcal{R}$-action of $L$ on $S$ are the orbits for the $\mathcal{L}$-action of $L$ (and vice-versa).
\end{lemma}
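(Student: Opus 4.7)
My plan is to show every fiber of $\mu_L$ is a single $\mathcal{L}$-orbit by using the section $s_L$ from Lemma \ref{sectionconstr} as a distinguished representative. One inclusion is automatic: since the $\mathcal{L}$- and $\mathcal{R}$-actions commute, $\mu_L$ is $\mathcal{L}$-invariant, so every $\mathcal{L}$-orbit sits inside a fiber. For the reverse inclusion it suffices to prove that each $b \in S$ lies in the $\mathcal{L}$-orbit of $s_L(\mu_L(b))$; then any two elements of $\mu_L^{-1}(\xi)$ share the common representative $s_L(\xi)$ and must coincide modulo $\mathcal{L}$.

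I would first check this on the dense open subgroup $H \subset S$. Writing $b \in H$ via the Cartan decomposition as $b = l\, e^{i\nu}$ with $l \in L$ and $\nu \in \mathfrak{l}$, the $\mathcal{L}$-invariance of $\mu_L$ reduces matters to verifying $s_L(\mu_L(e^{i\nu})) = e^{i\nu}$. This is a short direct computation using only the definitions: formula (\ref{momentpolar}) gives $\mu(\iota(e^{i\nu})) = i\,\iota(e^{2i\nu})$, the ambient section (\ref{thesection}) then recovers $\iota(e^{i\nu})$, and combining $s_L = s \circ \eta$ with the functoriality identity $(d\iota)^* \circ \mu \circ \iota = \mu_L$ delivers the claim. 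Hence $b = \mathcal{L}_l(s_L(\mu_L(b)))$ with $l \in L$, as required.

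The extension to all of $S$ is a density--compactness argument. Choose $b_n \in H$ with $b_n \to b$ and write $b_n = \mathcal{L}_{l_n}(s_L(\mu_L(b_n)))$ as above; compactness of $L \subset U(N)$ furnishes a convergent subsequence $l_n \to l \in L$, and continuity of $s_L$ (stated at the close of Lemma \ref{sectionconstr}), of $\mu_L$, and of the $\mathcal{L}$-action then give $b = \mathcal{L}_l(s_L(\mu_L(b)))$ in the limit. The ``vice versa'' statement follows by the same argument with $\mathcal{L}$ and $\mathcal{R}$ interchanged. The only step that really requires care is this passage to $S \setminus H$, where the Cartan decomposition is unavailable and the polar decomposition's unitary factor becomes ambiguous; but the continuity of $s_L$ on all of $\mu_L(S)$ has already encoded the necessary boundary behaviour, so the compactness argument suffices.
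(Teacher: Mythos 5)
Your proof is correct, but it follows a genuinely different route from the paper's. You make the section $s_L$ of Lemma \ref{sectionconstr} the protagonist: on the dense subgroup $H$ the Cartan decomposition shows that $s_L(\mu_L(b))$ is exactly the positive factor of $b$, so each fiber of $\mu_L$ restricted to $H$ is swept out by applying $\mathcal{L}_L$ to that representative, and you then transport this identity to the boundary $S\setminus H$ by a density--compactness limit, using compactness of $L$ together with the continuity of $s_L$ and $\mu_L$. The paper argues structurally instead: the proof of Lemma \ref{shuffle} shows (via normality, hence sphericity and multiplicity-freeness of $S$ as an $L\times L$-space) that each fiber of $\mu_L$ is contained in a single $L\times L$-orbit, and injectivity of $(d\iota)^*$ places it inside a single fiber of the ambient moment map $\mu$, whose fibers are the $\mathcal{L}$-orbits of $U(N)$; intersecting the $L\times L$-orbit with the $U(N)$-orbit then yields precisely the $\mathcal{L}_L$-orbit. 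What your approach buys is an explicit orbit representative $s_L(\xi)$ for every fiber and an elementary final step; what it costs is that the role of normality becomes invisible --- it is hidden inside the continuity of $s_L$ on all of $\mu_L(S)$, which in turn rests on the properness and injectivity statements of Lemma \ref{shuffle}. Both arguments are legitimate given the preceding lemmas; the only points worth spelling out in your write-up are that the $\mathcal{L}$-invariance of $\mu_L$ follows directly from $\mu(\iota(A))=i\,\iota(A)^*\iota(A)$ and $\iota(L)\subset U(N)$, and that the ``vice versa'' uses the analogous section built from the opposite polar decomposition $A=PU$ with $P=\sqrt{AA^*}$.
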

\begin{proof}
From Lemma \ref{shuffle} and its proof we know that any fiber of $\mu_L$ is contained in an $L\times L$ orbit and gets mapped by $\iota$ to a single fiber of the moment map of the $\mathcal{R}$-action of $U(N)$ on $M_{N\times N}(\mathbb{C})$.
On the other hand we know that the fibers of $\mu$ are exactly the orbits of the $\mathcal{L}$-action of $U(N)$.  It now suffices to remark that, for a point of $S$ regarded as sitting in $M_{N\times N}(\mathbb{C})$, the intersection of its $L\times L$-orbit  with its $\mathcal{L}$-orbit for $U(N)$ consists exactly of  its $\mathcal{L}$-orbit for $L$.
\end{proof}

\providecommand{\bysame}{\leavevmode\hbox to3em{\hrulefill}\thinspace}
\providecommand{\href}[2]{#2}


\begin{thebibliography}{LMTW98}

\bibitem[AB04]{alexeevbrion}
Valery Alexeev and Michel Brion, \emph{Stable reductive varieties. {I}.
  {A}ffine varieties}, Invent. Math. \textbf{157} (2004), no.~2, 227--274.

\bibitem[BCS05]{BCS}

Lev~A. Borisov, Linda Chen, and Gregory~G. Smith, \emph{The orbifold {C}how
  ring of toric {D}eligne-{M}umford stacks}, J. Amer. Math. Soc. \textbf{18}
  (2005), no.~1, 193--215 (electronic).

\bibitem[Bri87]{brionmoment}
Michel~Brion, \emph{Sur l'image de l'application moment}, S\'eminaire d'alg\`ebre Paul Dubreil et Marie-Paule Malliavin (Paris, 1986), 177--192, 
Lecture Notes in Mathematics, \textbf{1296}, Springer, Berlin, 1987. 

\bibitem[BGL02]{kahlercuts}
D.~Burns, V.~Guillemin, and E.~Lerman, \emph{Kaehler cuts}, 2002, \href{http://arxiv.org/abs/math/0212062}{arXiv:math/0212062}.

\bibitem[CLS12]{coxbigbook}                            
 D.~A. Cox, J.~B. Little and  H.~K. Schenck,    
\emph{Toric varieties}, 
Graduate Studies in Mathematics, vol.~\textbf{124}, American Mathematical Society, Providence, RI, 2011. 

\bibitem[Cox95]{cox}

David~A. Cox, \emph{The homogeneous coordinate ring of a toric variety}, J.
  Algebraic Geom. \textbf{4} (1995), no.~1, 17--50.

\bibitem[Del88]{delzant}
Thomas Delzant, \emph{Hamiltoniens p\'eriodiques et images convexes de
  l'application moment}, Bull. Soc. Math. France \textbf{116} (1988), no.~3,
  315--339.

\bibitem[DS10]{andrews}
A.~S. Dancer and A.~F. Swann, \emph{Non-abelian cut constructions and
  hyperk\"ahler modifications}, Rend. Semin. Mat. Univ. Politec. Torino
  \textbf{68} (2010), no.~2, 157--170.

\bibitem[EG98]{algcuts}
Dan Edidin and William Graham, \emph{Algebraic cuts}, Proc. Amer. Math. Soc.
  \textbf{126} (1998), no.~3, 677--685.

\bibitem[FMN10]{FMN}
Barbara Fantechi, Etienne Mann, and Fabio Nironi, \emph{Smooth toric
  {D}eligne-{M}umford stacks}, J. Reine Angew. Math. \textbf{648} (2010), 201--244.

\bibitem[GJS02]{implosion}
Victor Guillemin, Lisa Jeffrey, and Reyer Sjamaar, \emph{Symplectic implosion},
  Transform. Groups \textbf{7} (2002), no.~2, 155--184.
  
\bibitem[GS84]{guilsternconf}
Victor Guillemin and Shlomo Sternberg, 
\emph{A normal form for the moment map}, Differential geometric methods in mathematical physics (Jerusalem, 1982), 161--175, 
Math. Phys. Stud., \textbf{6}, Reidel, 1984.

\bibitem[GS90]{inphysics}
\bysame, \emph{Symplectic techniques in physics},
  second ed., Cambridge University Press, Cambridge, 1990.

\bibitem[Gom95]{gompf}  
Robert E.  Gompf,
\emph{A new construction of symplectic manifolds},
Ann. of Math. (2) \textbf{142} (1995), no.~3, 527--595. 

\bibitem[Hal97]{hall}
Brian~C. Hall, \emph{Phase space bounds for quantum mechanics on a compact
  {L}ie group}, Comm. Math. Phys. \textbf{184} (1997), no.~1, 233--250.

\bibitem[Hau98]{hausel}
Tam{\'a}s Hausel, \emph{Compactification of moduli of {H}iggs bundles}, J.
  Reine Angew. Math. \textbf{503} (1998), 169--192.
  
 \bibitem[HS02]{haussturm}                           
T. Hausel and B. Sturmfels,  
\emph{Toric hyperk\"ahler varieties},       
Documenta Math. \textbf{7} (2002), 495--534.

\bibitem[HNP94]{otherconv}
Joachim Hilgert, Karl-Hermann Neeb, and Werner Plank, \emph{Symplectic
  convexity theorems and coadjoint orbits}, Compositio Math. \textbf{94}
  (1994), no.~2, 129--180.

\bibitem[Kir84]{kirwanconv}
Frances Kirwan, \emph{Convexity properties of the moment mapping. {III}},
  Invent. Math. \textbf{77} (1984), no.~3, 547--552.

\bibitem[Kna02]{knapp}
Anthony~W. Knapp, \emph{Lie groups beyond an introduction}, second ed.,
  Progress in Mathematics, vol. \textbf{140}, Birkh\"auser Boston Inc., Boston, MA,
  2002.
  
\bibitem[Kno11]{knop}
Friedrich Knop, \emph{Automorphisms of multiplicity free {H}amiltonian
  manifolds}, J. Amer. Math. Soc. \textbf{24} (2011), no.~2, 567--601.

\bibitem[Ler95]{lerman}
Eugene Lerman, \emph{Symplectic cuts}, Math. Res. Lett. \textbf{2} (1995),
  no.~3, 247--258.

\bibitem[LM12]{lermanorbi}
Eugene Lerman and Anton Malkin, \emph{Hamiltonian group actions on symplectic
  {D}eligneÐ-{M}umford stacks and toric orbifolds}, Adv. Math. \textbf{229}
  (2012), no.~2, 984--1000.

\bibitem[LMTW98]{convbycuts}
Eugene Lerman, Eckhard Meinrenken, Sue Tolman, and Chris Woodward,
  \emph{Nonabelian convexity by symplectic cuts}, Topology \textbf{37} (1998),
  no.~2, 245--259.

\bibitem[Los09]{losev}
Ivan~V. Losev, \emph{Proof of the {K}nop conjecture}, Ann. Inst. Fourier
  (Grenoble) \textbf{59} (2009), no.~3, 1105--1134.

\bibitem[LR01]{liruan}
An-Min Li and Yongbin Ruan, \emph{Symplectic surgery and {G}romov-{W}itten
  invariants of {C}alabi-{Y}au 3-folds}, Invent. Math. \textbf{145} (2001),
  no.~1, 151--218.

\bibitem[LT97]{lermantolman}
Eugene Lerman and Susan Tolman, \emph{Hamiltonian torus actions on symplectic
  orbifolds and toric varieties}, Trans. Amer. Math. Soc. \textbf{349} (1997),
  no.~10, 4201--4230.
  
\bibitem[Mar85]{marle}
Charles-Michel Marle, 
\emph{Mod\`ele d'action hamiltonienne d'un groupe de Lie sur une vari\'et\'e symplectique}, Rend. Sem. Mat. Univ. Politec. Torino \textbf{43} (1985), no.~2, 227--251.

\bibitem[Mar08]{me}
Johan Martens, \emph{Equivariant volumes of non-compact quotients and instanton
  counting}, Comm. Math. Phys. \textbf{281} (2008), no.~3, 827--857.

\bibitem[Mei98]{meinrenken}
Eckhard Meinrenken, \emph{Symplectic surgery and the {${\rm Spin}^c$}-{D}irac
  operator}, Adv. Math. \textbf{134} (1998), no.~2, 240--277.

\bibitem[MT11]{us}
Johan Martens and Michael Thaddeus, \emph{Compactifications of reductive groups
  as moduli stacks of bundles}, 2011, \href{http://arxiv.org/abs/1105.4830}{arXiv:1105.4830}.

\bibitem[MW08]{extendedcut}
Ciprian Manolescu and Christopher Woodward, \emph{The symplectic cut of the
  extended moduli space}, 2008, unpublished.
  
\bibitem[MW12]{extendedfloer}
\bysame, \emph{Floer homology on the extended moduli space}, Perspectives in
  A\-na\-ly\-sis, Geometry, and Topology (Ilia Tenberg, Burglind J\"oricke, and
  Mikael Passare, eds.), Progress in Mathematics, vol.~\textbf{296}, Birkhauser, 2012, pp. 283--329,
 \href{http://arxiv.org/abs/0811.0805}{arXiv:0811.0805}.
  
\bibitem[Par09]{paradan}
Paul-{\'E}mile Paradan, \emph{Formal geometric quantization}, Ann. Inst.
  Fourier (Grenoble) \textbf{59} (2009), no.~1, 199--238.

\bibitem[Pez10]{pezzini}
Guido Pezzini, \emph{Lectures on spherical and wonderful varieties}, Actions
  hamiltoniennes: invariants et classification (Michel Brion and Thomas
  Delzant, eds.), Les cours du C.I.R.M., tome \textbf{1}, num\'ero 1, {C.I.R.M.}, 2010, pp.~33--53,
  Available from \url{http://ccirm.cedram.org/ccirm-bin/feuilleter}.
  
\bibitem[Sja98]{sjamaar}
Reyer Sjamaar, \emph{Convexity properties of the moment mapping re-examined},
  Adv. Math. \textbf{138} (1998), no.~1, 46--91.

\bibitem[Tha96]{gitflips}
Michael Thaddeus, \emph{Geometric invariant theory and flips}, J. Amer. Math.
  Soc. \textbf{9} (1996), no.~3, 691--723.
  
\bibitem[Tim11]{timashev}
D.A. Timashev,
\emph{Homogeneous spaces and equivariant embeddings},
Encyclopaedia of  Mathematical Sciences, vol.~\textbf{138}, 
Springer, Heidelberg, 2011.

\bibitem[Tol98]{tolman}
Susan Tolman, \emph{Examples of non-{K}\"ahler {H}amiltonian torus actions},
  Invent. Math. \textbf{131} (1998), no.~2, 299--310.

\bibitem[Vin95a]{vinberg2}
Ernest~B. Vinberg, \emph{The asymptotic semigroup of a semisimple {L}ie group},
  Semigroups in algebra, geometry and analysis ({O}berwolfach, 1993), de
  Gruyter Exp. Math., vol.~\textbf{20}, de Gruyter, Berlin, 1995, pp.~293--310.

\bibitem[Vin95b]{vinberg}
\bysame, \emph{On reductive algebraic semigroups}, Lie groups and {L}ie
  algebras: {E}. {B}. {D}ynkin's {S}eminar, Amer. Math. Soc. Transl. Ser. 2,
  vol. \textbf{169}, Amer. Math. Soc., Providence, RI, 1995, pp.~145--182.
  
\bibitem[Wei01]{weitsman}
Jonathan Weitsman, \emph{Non-abelian symplectic cuts and the geometric
  quantization of noncompact manifolds}, Lett. Math. Phys. \textbf{56} (2001),
  no.~1, 31--40,  EuroConf{\'e}rence Mosh{\'e} Flato
  2000, Part I (Dijon).

\bibitem[Woo96]{woodward}
Chris Woodward, \emph{The classification of transversal multiplicity-free group
  actions}, Ann. Global Anal. Geom. \textbf{14} (1996), no.~1, 3--42.

\bibitem[Woo98]{nonkahler}
\bysame, \emph{Multiplicity-free {H}amiltonian actions need not be {K}\"ahler},
  Invent. Math. \textbf{131} (1998), no.~2, 311--3198.

\end{thebibliography}
\end{document}